\newtheorem{theorem}{{Theorem}}[section]
\newtheorem{proposition}{{Proposition}}[section]
\newtheorem{corollary}[theorem]{Corollary}
\newtheorem{lemma}[theorem]{Lemma}
\newtheorem{example}[theorem]{Example}
\newtheorem{definition}[theorem]{Definition}
\newcommand{\Z}{\mathbb{Z}}
\newcommand{\R}{\mathbb{R}}
\renewcommand{\iff}{\Leftrightarrow}
\renewcommand{\phi}{\varphi}
\newcommand{\id}{\operatorname{id}}
\newcommand{\tensor}{\otimes}
\newcommand{\rank}{\operatorname{rank}}
\title{Non-abelian tensor product and circular orderability of groups}
\author{Maxim~Ivanov}
\address{Sobolev Institute of Mathematics, 630090 Novosibirsk, Russia}
\email{m.ivanov2@g.nsu.ru}
\thanks{The work was performed according to the Government research assignment for IM SB RAS, project FWNF-2022-0004}
\numberwithin{equation}{section}
\begin{document}
\begin{abstract}
For a group $ G $ we consider its tensor square $G \tensor G$ and exterior square $G \wedge G$. 
We prove that for a circularly orderable group $G$, under some assumptions on $H_1(G)$ and $H_2(G)$, its exterior square and tensor square are left-orderable. This yields an obstruction for a circularly orderable group $G$ to have torsion. We apply these results to study circular orderability of tabulated virtual knot groups.
\end{abstract}
\maketitle
\tableofcontents

\section{Introduction} 
A group $G$ is called left-orderable if there is an order relation on elements of $ G $ such that
for all $ h, g_1, g_2 \in G$
	$$ g_1 < g_2 \iff \ hg_1 < hg_2.$$ 
When G is countable, it is left-orderable if and only if it can be embedded into $\text{Homeo}_+(\R)$.
A generalisation of this notion is circular orderability which is a way to state that a group may be "arranged on a circle" and the position of elements with respect to each other is preserved by left multiplication.
Group $ G $ is called circularly orderable if and only if it admits a map $c:\ G^3 \to \Z$ which satisfies the following:
\begin{enumerate}
	\item $c(g) \in \{-1,0,1\}, \ \forall g \in G$;
	\item $c^{-1}(0) = \{(g_1, g_2, g_3) \in G^3 \ | \ g_i = g_j \text{ for some } i \neq j \}; $
	\item $ c(g_2, g_3, g_4) - c(g_1, g_3, g_4) + c(g_1, g_2, g_4) - c(g_1, g_2, g_3) = 0, \ \forall g_1, g_2, g_3, g_4 \in G;$ 
	\item $ c(g_1, g_2, g_3) = c(hg_1, hg_2, hg_3), \ \forall h, g_1, g_2, g_3 \in G.$ 
\end{enumerate}
When G is countable, it is circularly orderable if and only if it can be embedded into $\text{Homeo}_+(S^1)$.
Every left-orderable group is circularly orderable but not vice versa. For example, finite cyclic groups are circularly orderable, but not left-orderable. Properties (3) and (4) mean that the map $ c $  is a homogenous 2-cocycle. Although the definition of a circular order by a homogenous 2-cocycle is quite intuitve, it is often convenient to pass to inhomogenous cocycles. In the rest of this paper we will understand circular orders as inhomogenous 2-cocycles $f_c: \ G^2 \to \Z$, satisfying
\begin{enumerate}
 \item $f_c(g,h) \in \{0, 1\}$, $\forall g, h \in G$;
 \item $f_c(g, g^{-1}) = 1 $, $\forall g \in G \setminus \{\id\}$ .
\end{enumerate}
These cocycles are in 1-1 correspondence with homogenous cocycles $ c $ defined above (see \cite{CG}).
\FloatBarrier
As always one can use a 2-cocycle to define a central extension of a group G.
\begin{figure}[!ht]
\centering
\begin{tikzcd}[ampersand replacement=\&]
	1 \arrow[r] \& \Z \arrow[r, ""] \& {G_{f}} \arrow[r,""] \& G \arrow[r, ""] \& 1
\end{tikzcd}
\end{figure}
\FloatBarrier
Thus for each circular order $f$ there is an associated group extension $G_{f}$. Moreover 	$ G_f $ is orderable, with $(1, \id_G)$ being a positive cofinal element.
Further, when $G$ is a left-ordered group, with $z$ being positive cofinal central element, $G /\langle z \rangle $ is circularly orderable and the circular order is given by a normalized section (\cite{Zeleva}). 

Even though there are circularly orderable but not left-orderable groups, in some cases, circular orderability may imply left-orderability. The simplest case is when the second cohomology group $H^2(G, \Z)$ is trivial, so $G$ is a subgroup of $G_f$ and hence it is left-orderable. It is natural to investigate the relation between these two notions. A strong result in this direction is obtained by J.~Bell, A.~Clay, T.~Ghaswala in~\cite{BCG}: The group is left-orderable if and only if $G \times \Z_n$ is circularly orderable for every natural $n$.

Theory of virtual knots and links was introduced by L.~Kauffman in \cite{Kau99} as a generalization of classical knot theory. A family of polynomial invariants of virtual knots was studied by G. Amrendra, M. Ivanov, M. Prabhakar, A. Vesnin in~\cite{FPoly}. These polynomials calculated for knots in J. Greens's table may be found in~\cite{FpolyCalc}. For a definition of a group of a virtual knot see~\cite{SGK}. A theorem of J.~Howie and H.~Short~\cite{HS} states that groups of classical knots are left-orderable. However, it is not true for virtual knots. For example, the group of a knot 3.7 from J. Green’s table~\cite{Green} contains torsion and thus it is not left-orderable. Another invariant of virtual knot is a virtual knot quandle. Circular orderability of quandles was recently studied by I.~Ba and M.~Elhamdadi in~\cite{BE}.

In the present paper we consider orderability of non-abelian tensor square and e{}xterior square functors (see \cite{BL}). The main result is stated in theorems~\ref{ExtOrd},~\ref{TensorOrd}. Providing some restrictions on a circularly orderable group $G$, its tensor square and exterior square are left-orderable. This gives a tool to check whether a torsion is an obstruction for a group to be circularly orderable. Finally, we apply these results to the groups of virtual knots in J.~Green's table. 

The author would like to thank A.~Vesnin for helpful discussions.

\section{Group Homology, Universal Extensions and Orderability}
In the following section we present results related to group homology which will be used in the later sections. Most examples of groups we are interested in have free abelianizations, so the next proposition comes in handy.
\begin{proposition} Let $G$ be a group with finite second homology, such that $G_{ab}$ is free abelian. 
Then $G$ is circularly orderable iff it is left-orderable.
\end{proposition}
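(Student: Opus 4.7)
My plan is to reduce the statement to the vanishing of $H^2(G, \Z)$, which was already noted in the introduction to imply the equivalence of circular and left orderability, and then to deduce this vanishing from the Universal Coefficient Theorem using the two hypotheses.

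The forward direction (left-orderable implies circularly orderable) is standard and recalled in the introduction, so I would only address the converse. Assume $G$ is circularly orderable and let $f$ be a circular order cocycle. This gives a central extension
\[
1 \to \Z \to G_f \to G \to 1
\]
where $G_f$ is left-orderable with $(1, \id_G)$ a positive cofinal element. The strategy is to show that this extension splits, so that $G$ embeds into $G_f$ and inherits left-orderability as a subgroup.

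To prove the extension splits, it suffices to show $H^2(G, \Z) = 0$. By the Universal Coefficient Theorem for group cohomology, there is a split short exact sequence
\[
0 \to \operatorname{Ext}^1(H_1(G), \Z) \to H^2(G, \Z) \to \operatorname{Hom}(H_2(G), \Z) \to 0.
\]
Since $H_1(G) = G_{ab}$ is free abelian by hypothesis, $\operatorname{Ext}^1(H_1(G), \Z) = 0$. Since $H_2(G)$ is finite by hypothesis, $\operatorname{Hom}(H_2(G), \Z) = 0$. Therefore $H^2(G, \Z) = 0$, the central extension $G_f \to G$ admits a section, and $G$ is isomorphic to a subgroup of the left-orderable group $G_f$. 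Subgroups of left-orderable groups are left-orderable, which concludes the argument.

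I do not expect a serious obstacle here: both hypotheses are tailored precisely to kill the two summands produced by UCT. The only small point to keep in mind is that the Universal Coefficient Theorem as stated above is the one for group (co)homology with trivial $\Z$ coefficients, which is the natural setting since our central extension is by $\Z$ with trivial action. Everything else is bookkeeping.
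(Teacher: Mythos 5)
Your proof is correct and follows exactly the paper's argument: apply the Universal Coefficient Theorem, use the two hypotheses to kill $\operatorname{Ext}(G_{ab},\Z)$ and $\operatorname{Hom}(H_2(G),\Z)$, conclude $H^2(G,\Z)=0$ so the extension $G_f \to G$ splits, and realize $G$ as a subgroup of the left-orderable group $G_f$. No differences worth noting.
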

\begin{proof} 
By universal coefficients we have a short exact sequence
	$$ 0 \to \text{Ext}(G_{ab}, \Z) \to H^2(G) \to \text{Hom}(H_2(G), \Z) \to 0$$
Since $G_{ab}$ is free abelian, $\text{Ext}(G_{ab}, \Z) = 0$. The second homology group $H_2(G)$ is finite, thus $\text{Hom}(H_2(G), \Z) = 0$ and $H^2(G) = 0$. If there is a circular order $f$, it represents $0$ in $H^2(G)$ and the corresonding extension is split. Then $G$ is orderable as a subgroup of an orderable group.  
\end{proof}

\begin{lemma}\label{ext_ord} Let there be a morphism of extensions.
\FloatBarrier
\begin{figure}[!ht]
\centering
\begin{tikzcd}[ampersand replacement=\&]
1 \arrow[r]\& A \arrow[r, ""] \arrow{d} \& \tilde G \arrow[d, "p"] \arrow[r, ""] \& G \arrow[d, "="] \arrow[r] \& 1 \\
1 \arrow[r] \& B \arrow[r,  ""] \& \widehat{G} \arrow[r, ""] \& G \arrow[r] \& 1
\end{tikzcd}
\end{figure}
\FloatBarrier 
\noindent
If $A$ is left-orderable and $ \widehat{G} $  is circularly orderable, then $\widetilde{G}$ is circularly orderable. If additionally, $ \widehat{G} $ is left-orderable, then $\widetilde{G}$ is left-orderable.
\end{lemma}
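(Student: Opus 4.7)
My plan is to produce a circular order on $\widetilde{G}$ by constructing a left-orderable central extension of $\widetilde{G}$ as a pullback of the left-ordered central extension $\widehat{G}_f$ associated to the given circular order on $\widehat{G}$; the conclusion will then follow from the correspondence recalled in the introduction between circular orders and left-ordered central extensions with cofinal central $\Z$. The main technical tool is the classical lexicographic construction: if $1 \to K \to E \to Q \to 1$ has both $K$ and $Q$ left-orderable, then $E$ is left-orderable via the positive cone that declares $x > \mathrm{id}$ iff the image of $x$ in $Q$ is positive, or that image is trivial and $x$ is positive in $K$.

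The key steps, in order, are: (i) start with the circular-order cocycle $f$ on $\widehat{G}$ and the associated left-ordered central extension $1 \to \Z \to \widehat{G}_f \to \widehat{G} \to 1$ in which the generator of $\Z$ is positive cofinal; (ii) form the fibre product $P := \widetilde{G} \times_{\widehat{G}} \widehat{G}_f$, first restricting $\widehat{G}_f$ to the preimage of $\mathrm{Im}\,p$ if $p$ is not surjective (still left-orderable as a subgroup, and still containing the cofinal central $\Z$); (iii) observe that $P$ fits into two short exact sequences,
\[
1 \to \Z \to P \to \widetilde{G} \to 1 \;\; (\text{central}), \qquad 1 \to \ker p \to P \to \widehat{G}_f \to 1;
\]
(iv) apply the lex construction to the second sequence, using that $\ker p \subseteq A$ is left-orderable and $\widehat{G}_f$ is left-orderable, to left-order $P$; (v) verify that the central $\Z$ of the first sequence remains cofinal in $P$ under this lex order (it is cofinal in the quotient $\widehat{G}_f$, which is the ``dominant'' component of the lex), so by the correspondence in the introduction this central extension yields a circular order on $\widetilde{G} = P/\Z$.

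For the additional statement, when $\widehat{G}$ is itself left-orderable the central extension step can be skipped: $\widetilde{G}$ already fits into $1 \to \ker p \to \widetilde{G} \to \mathrm{Im}\,p \to 1$ with $\ker p \subseteq A$ left-orderable and $\mathrm{Im}\,p \subseteq \widehat{G}$ left-orderable, and the same lex construction directly left-orders $\widetilde{G}$. I expect the main obstacles to be technical bookkeeping rather than conceptual difficulty --- namely, handling the non-surjective case of $p$ by restricting along $\pi^{-1}(\mathrm{Im}\,p)$, and checking that the cofinality of the central $\Z$ survives both the pullback and the lex reordering. Both reduce to tracking $\Z \subseteq \widehat{G}_f$ through the two constructions and noting that cofinality passes from the quotient to a lex-ordered extension.
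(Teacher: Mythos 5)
Your proof is correct and rests on the same key decomposition as the paper's: the short exact sequence $1 \to \ker p \to \widetilde{G} \to \mathrm{Im}\,p \to 1$ with $\ker p$ left-orderable (as a subgroup of $A$) and $\mathrm{Im}\,p$ circularly (or left-) orderable as a subgroup of $\widehat{G}$. The only difference is that the paper simply invokes the closure of circular orderability under extensions by left-orderable kernels, whereas you additionally prove that closure fact via the pullback of the central extension $\widehat{G}_f$ and a lexicographic order; this is a correct and standard verification of the step the paper leaves implicit.
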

\begin{proof} 
From the diagram we get a short exact sequence
\FloatBarrier
\begin{figure}[!ht]
\centering
\begin{tikzcd}[ampersand replacement=\&]
1 \arrow[r] \& \text{Ker}(p) \arrow[r, ""] \& \widetilde{G} \arrow[r] \& \text{Im}(p) \arrow[r] \& 1,
\end{tikzcd}
\end{figure}
\FloatBarrier
$\text{Ker}(p)$ is orderable as a subgroup of an orderable group A. $\text{Im}(p)$ is circularly orderable as a subgroup of $\widehat{G}$ correspondingly. If $\widehat{G}$ is orderable, then so is $\text{Im}(p)$ and this yields the result.
\end{proof}
Now we apply this lemma to explore the connection of the second homology group with circular orderability.
Recall that for any group $G$ such that $ G_{\text{ab}} $ is free abelian, one can construct a universal central extension, or a Schur covering group, $\widehat{E}$ which is a central extension
\FloatBarrier
	\begin{figure}[!ht]
	\centering
	\begin{tikzcd}[ampersand replacement=\&]
	E_u: 1 \arrow[r] \& \ H_2(G) \arrow[r, ""] \& G_{\text{u}} \arrow[r, ""] \& G \arrow[r] \& 1
	\end{tikzcd}
	\end{figure}
\FloatBarrier
For the construction and related details see \cite{Kuz}. Universal central extension has the following property: for any other central extension $ E $ of a group $G$ there is a morphism of extensions $\phi: \ E_u \to E$. This allows us to conclude the following.
\begin{theorem}\label{univ-ord} Let $G$ be a circularly orderable group, with $H_1(G)$ being free abelian and $H_2(G)$ torsion-free.
Then its universal extension is left-orderable.
\end{theorem}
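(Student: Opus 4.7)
The plan is to apply Lemma~\ref{ext_ord} with the universal central extension $E_u$ on top and a circular extension of $G$ on the bottom. Since $G$ is circularly orderable, pick any circular order $f$ and form the associated central extension
\[
1 \to \Z \to G_f \to G \to 1,
\]
which, as recalled in the introduction, is itself left-orderable with $(1,\id_G)$ as a positive cofinal element.

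Next, since $H_1(G) = G_{\mathrm{ab}}$ is free abelian, the universal central extension $E_u$ exists, and its universal property provides a morphism of extensions $\phi: E_u \to E_f$. Written out, this is exactly the shape of diagram required by Lemma~\ref{ext_ord}, with $\widetilde G = G_u$, $\widehat G = G_f$, $A = H_2(G)$ and $B = \Z$, and with the right-hand vertical map being the identity on $G$.

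The final ingredient is to check that $A = H_2(G)$ is left-orderable. By hypothesis it is a torsion-free abelian group, so it embeds into the $\Q$-vector space $H_2(G) \otimes_\Z \Q$, which carries a lexicographic order with respect to any $\Q$-basis; restricting this order to $H_2(G)$ makes it left-orderable. With $H_2(G)$ left-orderable and $G_f$ left-orderable, the second part of Lemma~\ref{ext_ord} applies and yields that $G_u$ is left-orderable.

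The only nontrivial step is the orderability of $H_2(G)$, but this is the classical fact that any torsion-free abelian group is orderable; everything else is a direct assembly of the universal property of $E_u$, the left-orderability of $G_f$, and the lemma already proved in the preceding section.
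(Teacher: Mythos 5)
Your proposal is correct and follows essentially the same route as the paper: form the extension $G_f$ associated to a circular order, use the universal property to obtain a morphism $E_u \to E_f$, and apply Lemma~\ref{ext_ord} together with the left-orderability of the torsion-free abelian group $H_2(G)$ and of $G_f$. The paper's own proof is identical in structure, merely citing the orderability of torsion-free abelian groups without the explicit embedding into a rational vector space.
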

\begin{proof} Consider an extension, corresponding to a circular order $f$. Then there is a morphism from the universal extension.
\FloatBarrier
	\begin{figure}[!ht]
	\centering
	\begin{tikzcd}[ampersand replacement=\&]
	1 \arrow[r] \& H_2(G) \arrow[r, ""] \arrow[d, ""]\& G_{u} \arrow[d,""] \arrow[r, ""] \& G \arrow[d, "="] \arrow[r] \& 1\\
	1 \arrow[r] \& \Z \arrow[r, ""] \& G_{f} \arrow[r, ""] \& G \arrow[r] \& 1
	\end{tikzcd}
	\end{figure}
	\FloatBarrier
Since torsion-free abelian groups are left-orderable, applying Lemma~\ref{ext_ord} we get the left-orderability of $G_{u}$.
\end{proof}
There is a special case when $ H_2(G) = \Z $. Then the universal central extension may correspond to some circlular order on the group $G$.
\begin{proposition} Let $G$ be a group with $H_2(G) = \Z$, and $H_1(G)$ being free abeilan. If $ G $ is not left-orderable then its universal central extension $G_u$ corresponds to some circular order on $G$.
\end{proposition}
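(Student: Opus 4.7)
The plan is to exploit the universal property of $G_u$ against the central extension $G_f$ corresponding to any circular order $f$ on $G$, and to extract from the resulting comparison map an injection $G_u \hookrightarrow G_f$ that produces a left order on $G_u$ with a positive cofinal central generator of $H_2(G)$. Note that the statement is most naturally read with the implicit assumption that $G$ is circularly orderable, since otherwise there is no circular order for $G_u$ to correspond to; fix any circular order $f$ on $G$ with associated extension $G_f$.

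Since $H_1(G)$ is free abelian, universal coefficients give $H^2(G,\Z) \cong \mathrm{Hom}(H_2(G),\Z) \cong \Z$. The universal property of $E_u$ supplies a morphism of extensions $\phi: G_u \to G_f$ that is the identity on $G$ and acts as multiplication by some integer $n$ on the central $H_2(G) = \Z$ (the integer $n$ being exactly the element of $\mathrm{Hom}(H_2(G),\Z)$ associated to $[f]$ by universal coefficients). The first crucial step is to rule out $n = 0$: were this the case, $\phi$ would factor through $G_u / H_2(G) = G$ to give a section of $G_f \to G$, embedding $G$ into the left-orderable group $G_f$ and contradicting the hypothesis that $G$ is not left-orderable. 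Hence $n \neq 0$ and $\phi$ is injective.

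Pull back the left order from $G_f$ along $\phi$ to obtain a left order on $G_u$. A generator $z$ of $H_2(G)$ is sent to $n \in \Z \subset G_f$, so either $z$ (if $n > 0$) or $z^{-1}$ (if $n < 0$) becomes positive; cofinality in $G_u$ follows from cofinality of $1$ in $G_f$ by a routine estimate comparing $\phi(z^k) = nk$ with $\phi(h)$ for arbitrary $h \in G_u$. Since $G_u$ is then a left-ordered group with a positive cofinal central generator of its $H_2(G) = \Z$, the normalized-section construction recalled in the introduction (see~\cite{Zeleva}) yields a circular order on $G = G_u/\langle z\rangle$ whose associated central extension is $G_u$ itself.

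The hardest step is excluding $n = 0$; this is where the hypothesis that $G$ is not left-orderable enters essentially. After that, transporting the order along the injection $\phi$ and invoking the general correspondence between positive-cofinal central left orders and circular orders on the quotient is routine.
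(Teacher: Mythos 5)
Your proposal is correct and follows essentially the same route as the paper: both use the universal property to get a comparison morphism $G_u \to G_f$, use the non-left-orderability of $G$ to rule out the zero map on $H_2(G)=\Z$ (hence get injectivity), and then identify a positive cofinal central generator of $H_2(G)$ in the pulled-back left order. Your write-up just makes explicit two points the paper leaves implicit — why $n=0$ would force a section $G\to G_f$, and the final appeal to the Zeleva correspondence.
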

\begin{proof} $G$ satisfies the condition of the Theorem~\ref{univ-ord}, so $G_u$ is left-orderable. The only thing we need to check is that $H_2(G)$ is generated by a positive cofinal element in $G$. Since $G$ is not left-orderable, the morphism $i:\ H_2(G) \to \Z$ cannot be a zero morphism. We have $H_2(G) = \Z$, thus the morphism is an embedding and, by 5-lemma, so is $p: G_u \to G_{f}$. The image of a generator $1 \in \Z$ is $i(1) = k$ and it is obviously cofinal in $G_{f}$. Hence it is cofinal in $G_u$. If $1$ is not a positive generator, we replace it by $ -1 $, which is also cofinal. 
\end{proof}
Since we are interested in studying circular orderability of the universal extension, it is useful to know its second homology group.
\begin{lemma}\label{spectral_seq}
Suppose $G$ is a group, with $H_1(G) = \Z^k	$ and $H_2(G)$ being finite.
Consider a universal central extension $G_{u}$ of G, then $H_2(G_{u})$ is finite.
\end{lemma}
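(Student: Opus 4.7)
The plan is to apply the Lyndon--Hochschild--Serre spectral sequence in homology to the central extension
$$1 \to A \to G_u \to G \to 1, \qquad A := H_2(G).$$
Because $A$ is central, $G$ acts trivially on $H_*(A)$, and the $E^2$-page reads
$$E^2_{p,q} = H_p(G;\, H_q(A)) \Longrightarrow H_{p+q}(G_u).$$
The group $H_2(G_u)$ carries a three-step filtration whose associated graded pieces are $E^\infty_{2,0}$, $E^\infty_{1,1}$, $E^\infty_{0,2}$, with each $E^\infty_{p,q}$ a subquotient of the corresponding $E^2_{p,q}$. So it suffices to prove that these three $E^2$-entries are finite.

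The term $E^2_{2,0} = H_2(G)$ is finite by hypothesis. For $E^2_{1,1} = H_1(G; A)$, I would invoke the universal coefficient theorem for homology: the $\text{Tor}$-summand $\text{Tor}(H_0(G), A) = \text{Tor}(\Z, A)$ vanishes, leaving $E^2_{1,1} \cong H_1(G) \otimes A \cong A^k$, which is finite since $A$ is. For $E^2_{0,2} = H_2(A)$, I would use that the integral second homology of a finite abelian group is finite --- either by decomposing $A$ into cyclic summands and applying K\"unneth, or by recalling that $H_2(A) \cong A \wedge A$ when $A$ is abelian.

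Combining these three bounds, every $E^\infty_{p,q}$ on the antidiagonal $p+q = 2$ is a subquotient of a finite group, and $H_2(G_u)$ is assembled from them by two successive extensions of finite abelian groups, hence is finite. The only ingredient beyond the spectral sequence itself is the finiteness of the Schur multiplier of a finite abelian group, which is the main (still routine) technical point; everything else is bookkeeping on the $E^2$-page.
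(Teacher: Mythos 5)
Your proposal follows essentially the same route as the paper: the Lyndon--Hochschild--Serre spectral sequence for the central extension $1 \to H_2(G) \to G_u \to G \to 1$, finiteness of the three $E^2$-terms on the antidiagonal $p+q=2$, and assembly of $H_2(G_u)$ from finite subquotients. You are in fact slightly more careful than the paper at the corner term, which the paper records as $H_0(G, H_2(A)) = 0$ but which is really $H_2(A) \cong A \wedge A$ for the finite abelian group $A = H_2(G)$ --- finite, as you correctly observe, which is all the argument needs.
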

\begin{proof}{}
In Lyndon-Hochshild-Serre spectral sequence we have 
$$E^{2,0}_2 = H_2(G, H_0(A)) = H_2(G)$$
$$E^{1,1}_2 = H_1(G, H_1(A)) = H_1(G) \tensor A$$
$$E^{0,2}_2 = H_0(G, H_2(A)) = 0$$
Since $A = H_2(G)$ is finite then all these groups are finite and so groups $E^{2,0}_{\infty}, E^{1,1}_{\infty}, E^{0,2}_{\infty}$ are also finite. The second homology of the extension is an extension of finite group by finite, so it is finite.
\end{proof}
\section{Tensor product and exterior product}
The non-abelian tensor product of two groups, acting on each other, was introduced by R. Brown and J.-L. Loday \cite{BL}. It arises in the description of a third relative homotopy group of a triad. A group-theoretical approach to the tensor product along with computations for finite groups of order up to 30 is presented in \cite{BJR}. Here we will only need a special case of a tensor product, which is a tensor square of a group, with the action given by a conjugation.
\begin{definition} For an arbitraty group $G$ its tensor square $G \tensor G$ is defined as a group generated by the symbols $g \tensor h$, for $g, h \in G$, subject to the relations
	$$ gh \tensor k = ({}^gh \tensor {}^gk)(g \tensor k) $$
	$$ g \tensor hk = (g \tensor h)({}^hg \tensor {}^hk) $$
	where ${}^gh:= ghg^{-1}$.
\end{definition}
We will also consider an exterior square functor which is closely related to a tensor square functor.
\begin{definition} For a group $G$ its exterior square $G \wedge G$ is defined as a quotient of $G \tensor G$ by a normal subgroup generated by elements $\{g \tensor g \ | \ g \in G\}$.
\end{definition}
\begin{proposition}[Proposition 7, \cite{BJR}] For any central extension
\FloatBarrier
	\begin{figure}[!ht]
	\centering
	\begin{tikzcd}[ampersand replacement=\&]
	1 \arrow[r]\& A \arrow[r] \& K \arrow[r,  "\pi"] \& G \arrow[r] \& 1
	\end{tikzcd}
	\end{figure}
\FloatBarrier
There is a homomorphism $\xi: G \tensor G \to K$ such that $\pi \circ \xi$ is the commutator map 
$$g\tensor h \mapsto [g, h].$$
The map $\xi$ is defined by sending $g \tensor h$ to $[k_1, k_2]$, where $\pi(k_1) = g$, $\pi(k_2) = h$, and it does not depend on the choice of $k_i$.
\end{proposition}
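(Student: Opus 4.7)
The plan is to first construct a set map $\widetilde\xi \colon G \times G \to K$ and then verify it descends to a group homomorphism out of $G \tensor G$. For each $g, h \in G$, pick any lifts $k_1, k_2 \in K$ with $\pi(k_1) = g$, $\pi(k_2) = h$, and set $\widetilde\xi(g, h) := [k_1, k_2]$. The first step is to show that this is independent of the choice of lifts. Any other lift of $g$ has the form $k_1 a$ with $a \in A$, and since $A$ is central in $K$, a direct expansion gives $[k_1 a, k_2] = [k_1, k_2]$, and similarly in the second argument. So $\widetilde\xi$ is a well-defined function on $G \times G$.

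Next, I would check that the map $\widetilde\xi$ satisfies the two defining relations of the non-abelian tensor square. Both reduce to classical commutator identities in the group $K$. Explicitly, for any $k_1, k_2, k_3 \in K$ one has
\begin{equation*}
[k_1 k_2, k_3] = [{}^{k_1}k_2, {}^{k_1}k_3]\,[k_1, k_3], \qquad [k_1, k_2 k_3] = [k_1, k_2]\,[{}^{k_2}k_1, {}^{k_2}k_3].
\end{equation*}
Applying $\pi$ to each factor translates these into the relations $gh \tensor k = ({}^gh \tensor {}^gk)(g \tensor k)$ and $g \tensor hk = (g \tensor h)({}^hg \tensor {}^hk)$ that define $G \tensor G$. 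Thus the universal property of $G \tensor G$ as the group on the generators $g \tensor h$ subject to these relations produces a homomorphism $\xi \colon G \tensor G \to K$ with $\xi(g \tensor h) = [k_1, k_2]$.

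Finally, applying $\pi$ one computes $\pi(\xi(g \tensor h)) = \pi([k_1, k_2]) = [\pi(k_1), \pi(k_2)] = [g, h]$, so $\pi \circ \xi$ is the commutator map, as required. The only conceptual input is the centrality of $A$, which is used twice: once to make $\widetilde\xi$ well-defined on $G \times G$, and implicitly in the fact that the two commutator identities hold without any correction terms inside $K$. The rest is formal and involves no obstacle beyond the routine verification of the commutator identities displayed above.
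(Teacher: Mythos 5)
The paper gives no proof of this statement --- it is quoted verbatim as Proposition 7 of \cite{BJR} --- so there is nothing internal to compare against. Your argument is correct and is the standard one: centrality of $A$ gives independence of the lifts, the two commutator identities in $K$ (which hold in any group, with the convention $[a,b]=aba^{-1}b^{-1}$ and ${}^ab=aba^{-1}$ matching the paper's) show that the assignment $g\tensor h\mapsto [k_1,k_2]$ kills the defining relations of $G\tensor G$, and $\pi([k_1,k_2])=[g,h]$ is immediate. The only phrasing I would tighten is the step ``applying $\pi$ to each factor'': what you actually need is the observation that $k_1k_2$ and ${}^{k_1}k_2$ are lifts of $gh$ and ${}^gh$ respectively, so the identity in $K$ literally reads as $\widetilde\xi$ applied to both sides of the tensor relation; as written this is implicit but clearly intended, and the proof is complete.
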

Since $g\tensor g$ is mapped to zero by the commutator map, we have the following corollary.
\begin{corollary} The map $\xi$ factors through a map $\xi': \ G \wedge G \to K$ 
$$\xi'(g \wedge h) = [k_1, k_2].$$
\end{corollary}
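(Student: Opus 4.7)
The plan is to apply the universal property of the quotient group $G \wedge G = (G \otimes G)/N$, where $N$ is the normal subgroup generated by the elements $g \otimes g$. Since $\ker \xi$ is automatically normal, the homomorphism $\xi$ descends to a well-defined map $\xi' \colon G \wedge G \to K$ as soon as every generator $g \otimes g$ of $N$ lies in $\ker \xi$. So the whole task reduces to verifying $\xi(g \otimes g) = 1$ for each $g \in G$.

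For this, I would invoke the freedom in choosing the lifts provided by the previous proposition: since $\xi(g \otimes h) = [k_1, k_2]$ does not depend on the choice of $k_1, k_2 \in \pi^{-1}(g), \pi^{-1}(h)$, in the case $g = h$ I can simply pick a single lift $k \in \pi^{-1}(g)$ and set $k_1 = k_2 = k$. Then
\[
\xi(g \otimes g) = [k, k] = 1,
\]
as desired. (Alternatively, one could take any two lifts $k_1, k_2$ of $g$, note that $k_2 = a k_1$ for some $a \in A$, and use centrality of $A$ to conclude $[k_1, a k_1] = [k_1, a][k_1, k_1] = 1$; but the single-lift argument is quicker.)

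Once the factoring is established, the formula $\xi'(g \wedge h) = [k_1, k_2]$ is immediate from the definition of $\xi$ followed by the quotient map $G \otimes G \to G \wedge G$. There is no real obstacle here: the statement is essentially an observation that $[k, k] = 1$ plus the universal property of the quotient.
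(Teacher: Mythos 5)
Your proposal is correct and follows essentially the same route as the paper, which justifies the corollary with the one-line remark that $g \tensor g$ is sent to the identity because $\xi$ is defined via commutators of lifts. Your write-up is in fact slightly more careful than the paper's: you explicitly invoke the independence of $\xi$ on the choice of lifts to take $k_1 = k_2 = k$ and get $[k,k]=1$, which is exactly the detail needed to make the paper's remark rigorous.
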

\FloatBarrier
Tensor square and exterior square functors are connected by the following diagram
\FloatBarrier
\begin{figure}[!ht]
\centering
\begin{tikzcd}[ampersand replacement=\&]
\& \& 1 \arrow{d} \& 1 \arrow{d}\\
\& \Gamma(G_{ab}) \arrow{r} \arrow{d} \& J(G) \arrow{r} \arrow{d} \& H_2(G) \arrow{r} \arrow{d} \& 1 \\
1 \arrow{r} \& \nabla(G) \arrow{d} \arrow{r} \& G \tensor G \arrow{r} \arrow{d} \& G \wedge G \arrow{r} \arrow{d}\& 1\\
\& 1 \arrow{r} \& G' \arrow{d} \arrow{r} \& G' \arrow{d} \arrow{r} \& 1\\
\& \&  1 \& 1 \&
\end{tikzcd}
\end{figure}
\FloatBarrier
\noindent
where $\nabla(G)$ is a central subgroup generated by elements $\{g \tensor g \ | \  g \in G\}$ and $\Gamma(G_{ab})$ is a Whitehead's universal quadratic functor. Now we use the diagram to study orderability of exterior and tensor squares of a circularly orderable group $G$.

\begin{theorem}\label{ExtOrd}
Let $ G $ be a group with a circularly orderable or left-orderable central extension, such that $H_2(G)$ is torsion-free. Then $G \wedge G$ is circularly orderable or left-orderable correspondingly.
\end{theorem}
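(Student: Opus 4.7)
The plan is to realize $G \wedge G$ as the top row of a suitable morphism of central extensions and then invoke Lemma~\ref{ext_ord}. From the right-hand column of the displayed $3 \times 3$ diagram I extract the central extension
\[ 1 \to H_2(G) \to G \wedge G \to G' \to 1, \]
where $G' = [G,G]$ is the commutator subgroup, and I observe that the torsion-free abelian group $H_2(G)$ is automatically left-orderable.

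Next, let $1 \to A \to \widehat{G} \xrightarrow{\pi} G \to 1$ denote the given central extension, with $\widehat{G}$ circularly (resp.\ left) orderable. The proposition of Brown--Johnson--Robertson quoted above furnishes a homomorphism $\xi : G \tensor G \to \widehat{G}$ which, since $g \tensor g$ is a commutator of lifts of $g$ with itself, kills the generators of $\nabla(G)$ and therefore factors through $\xi' : G \wedge G \to \widehat{G}$, $g \wedge h \mapsto [\hat g, \hat h]$. Its image lies in $[\widehat{G}, \widehat{G}]$, hence in $K := \pi^{-1}(G') \subseteq \widehat{G}$; restricting the original extension gives a central extension $1 \to A \to K \to G' \to 1$. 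Because $\pi(\xi'(g \wedge h)) = [g,h]$ agrees with the boundary map of the top row, and $H_2(G)$ is its kernel, $\xi'$ carries $H_2(G)$ into $\ker(\pi|_K) = A$, producing a morphism of central extensions
\begin{center}
\begin{tikzcd}[ampersand replacement=\&]
1 \arrow[r] \& H_2(G) \arrow[r] \arrow[d] \& G \wedge G \arrow[d, "\xi'"] \arrow[r] \& G' \arrow[d, "="] \arrow[r] \& 1 \\
1 \arrow[r] \& A \arrow[r] \& K \arrow[r] \& G' \arrow[r] \& 1
\end{tikzcd}
\end{center}
with the identity on the right.

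Finally, $K$ sits inside $\widehat{G}$ and therefore inherits circular (resp.\ left) orderability, while $H_2(G)$ is left-orderable. Lemma~\ref{ext_ord} applied to the square above (with its ``$A$'' taken to be $H_2(G)$ and its ``$\widehat{G}$'' taken to be our $K$) then delivers the desired orderability of $G \wedge G$.

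The one delicate point is verifying that the diagram above really is a morphism of extensions: the right square commutes immediately from the defining formula for $\xi'$, but one must still check that the induced arrow $H_2(G) \to A$ is well-defined (i.e.\ that $\xi'$ does send the kernel of $G \wedge G \to G'$ into $A$ rather than merely into $K$). Once that kernel matching is established, the rest of the argument is a transparent invocation of Lemma~\ref{ext_ord}, and the same logic handles the circular and left-orderable cases simultaneously.
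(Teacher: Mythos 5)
Your proposal is correct and follows essentially the same route as the paper: both hinge on the Brown--Johnson--Robertson map $\xi'$, the identification of $\ker(G \wedge G \to G')$ with the torsion-free (hence left-orderable) group $H_2(G)$, and the fact that the image of $\xi'$ lands in a circularly (resp.\ left) orderable subgroup of the given central extension. The only cosmetic difference is that you package the final step as an explicit invocation of Lemma~\ref{ext_ord}, whereas the paper inlines that lemma's kernel--image argument directly; your ``delicate point'' about $\xi'$ sending $H_2(G)$ into $A$ is in fact automatic from the commutativity of the right-hand square.
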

\begin{proof}
Consider the following diagram
\FloatBarrier
\begin{figure}[!ht]
\centering
\begin{tikzcd}[ampersand replacement=\&]
 1 \arrow[r] \& H_2(G) \arrow[r] \& G \wedge G \arrow[r,""] \arrow[d,"\xi'"] \& \left[G,G\right] \arrow[r] \arrow[d,"="] \& 1\\
\ \& \ \& \left[K,K\right] \arrow[r,"\pi"] \& \left[G,G\right] \arrow[r]\& 1
\end{tikzcd}
\end{figure}
\FloatBarrier
and a short exact sequence
\FloatBarrier
	\begin{figure}[!ht]
	\centering
	\begin{tikzcd}[ampersand replacement=\&]
	1 \arrow[r] \&\text{Ker}\ \xi' \arrow[r] \& G \wedge G \arrow[r, "\xi'"] \& \text{Im} \ \xi' \arrow[r] \& 1
	\end{tikzcd}
	\end{figure}
\FloatBarrier
If $K$ is circularly orderable (left-orderable) then so is $\text{Im} \ \xi'$ as a subgroup. From the commutativity of the diagram  we have
$$\text{Ker}\ \xi' \subset \text{Ker} \ \pi \circ \xi' = \text{Ker}\  G \wedge G \to [G,G] = H_2(G).$$
If $H_2(G)$ has no torsion, it is left-orderable. Then $  \text{Ker}\ \xi' $ is left-orderable.
$G \wedge G$ is an extension of circularly orderable group (left-orderable group) by a left-orderable group and thus it is circularly orderable (left-orderable).
\end{proof}
\begin{theorem}\cite{BFM}\label{TensorBFM}
Suppose that $G_{ab}$ has no elements of order 2. Then 
$$\nabla(G) \cong \nabla(G_{ab}) \text{ and }   G \tensor G \cong \nabla(G) \times (G \wedge G).$$
\end{theorem}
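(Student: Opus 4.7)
The statement packages two independent assertions, so I would prove them in sequence.

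For the isomorphism $\nabla(G) \cong \nabla(G_{ab})$, my plan is to exploit the commutative diagram already in hand. Its first column gives a canonical surjection $\Gamma(G_{ab}) \twoheadrightarrow \nabla(G)$ sending $\gamma(\bar g)$ to $g \tensor g$. On the other hand, the abelianization $G \to G_{ab}$ induces a map $G \tensor G \to G_{ab} \tensor_{\Z} G_{ab}$ (the non-abelian tensor product collapses to the ordinary one since the action becomes trivial), and restricting it produces a surjection $\nabla(G) \twoheadrightarrow \nabla(G_{ab})$. These fit into a commutative triangle
$$\Gamma(G_{ab}) \twoheadrightarrow \nabla(G) \twoheadrightarrow \nabla(G_{ab})$$
whose composite is the standard map $\gamma(a) \mapsto a \tensor a$. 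A classical fact about Whitehead's quadratic functor asserts that this composite is an isomorphism when the abelian group $G_{ab}$ contains no element of order $2$. Under our hypothesis, both legs of the triangle are therefore isomorphisms, giving $\nabla(G) \cong \nabla(G_{ab}) \cong \Gamma(G_{ab})$.

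For the decomposition $G \tensor G \cong \nabla(G) \times (G \wedge G)$, observe that by the middle row of the big diagram we have a short exact sequence
$$1 \to \nabla(G) \to G \tensor G \to G \wedge G \to 1,$$
and $\nabla(G)$ is central in $G \tensor G$. Hence it suffices to produce a group-theoretic splitting $s \colon G \wedge G \to G \tensor G$ of the projection, since centrality then automatically converts the semidirect product into a direct product. The naive choice $g \wedge h \mapsto g \tensor h$ is not well-defined because $g \tensor g$ is nontrivial in $\nabla(G)$. My plan is to define $s$ by an antisymmetrized formula such as $s(g \wedge h) = (g \tensor h)(h \tensor g)^{-1} \cdot c(g,h)$ for a suitable correction $c(g,h) \in \nabla(G)$, using the identification of $\nabla(G)$ with the Whitehead functor $\Gamma(G_{ab})$ from the first part. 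Because $G_{ab}$ has no $2$-torsion, $\Gamma(G_{ab})$ admits the "half" operation needed to balance $g \tensor g \mapsto 1$, and the cohomology class of the central extension in $H^2(G \wedge G, \nabla(G))$ is killed by passing through this $2$-divisible quotient.

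The main obstacle is constructing and validating the splitting in the second part. Verifying that the candidate map $s$ respects the intricate non-abelian relations of $G \wedge G$ requires substantial commutator calculus inside $G \tensor G$, for example computing $gh \tensor gh$ via both defining relations and showing the "cross terms" $g \tensor h$ and $h \tensor g$ aggregate into an element of $\nabla(G)$ that can be absorbed into the correction. Part (i), by contrast, reduces quickly to a known property of Whitehead's functor once the diagram is in place, and serves as an essential input to make the antisymmetrization in (ii) meaningful.
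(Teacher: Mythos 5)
The paper gives no proof of this statement at all---it is imported verbatim from \cite{BFM}---so there is no internal argument to compare yours against; I can only assess your outline on its own terms. Your first part is essentially sound: the triangle $\Gamma(G_{ab}) \twoheadrightarrow \nabla(G) \twoheadrightarrow \nabla(G_{ab})$ commutes, its composite is the Whitehead map $\gamma(a) \mapsto a \tensor a$, injectivity of that map for abelian groups with no $2$-torsion is indeed classical, and two surjections with injective composite are both isomorphisms. (You should still record why $\Gamma(G_{ab}) \to \nabla(G)$ is well defined and surjective, but that is exactly what the Brown--Loday diagram you invoke supplies.)

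The second part has a genuine gap, and the specific mechanism you propose would fail. In $G \wedge G$ one has $h \wedge g = (g \wedge h)^{-1}$, because $(g \tensor h)(h \tensor g)$ lies in $\nabla(G)$. Consequently the image of your candidate $s(g \wedge h) = (g \tensor h)(h \tensor g)^{-1}\,c(g,h)$ under the projection $G \tensor G \to G \wedge G$ is $(g \wedge h)(h \wedge g)^{-1} = (g \wedge h)^{2}$, not $g \wedge h$; since $c(g,h)$ lies in the kernel of that projection, no choice of correction can repair this, so no map of this antisymmetrized shape is a section. Moreover, the heuristic you offer---that $\Gamma(G_{ab})$ "admits the half operation" because $G_{ab}$ has no $2$-torsion, so the extension class dies in a $2$-divisible quotient---is false already in the most relevant case $G_{ab} \cong \Z^{n}$, where $\Gamma(G_{ab})$ is free abelian and not $2$-divisible. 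The splitting of $1 \to \nabla(G) \to G \tensor G \to G \wedge G \to 1$ is the real content of the theorem and has to be produced by a different device (in \cite{BFM} it comes from a structural analysis of $G \tensor G$ using the swap involution $g \tensor h \mapsto (h \tensor g)^{-1}$ and the identification $\nabla(G) \cong \Gamma(G_{ab})$ from your first part, not from dividing an antisymmetrization by two). As written, your outline names the obstacle but does not overcome it, and the route you sketch toward it is a dead end.
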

\begin{theorem}\label{TensorOrd} Let $G$ be circularly orderable, $H_1(G)$ be finitely generated free abelian, $H_2(G)$ be torsion-free. Then $G \tensor G$ is left-orderable.
\end{theorem}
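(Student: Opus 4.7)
My plan is to reduce the statement, via the Brown--Johnson--Robertson decomposition (Theorem~\ref{TensorBFM}), to checking left-orderability of two factors. Since $H_1(G)$ is free abelian it has no $2$-torsion, so Theorem~\ref{TensorBFM} gives
$$ G \tensor G \cong \nabla(G) \times (G \wedge G) \quad \text{and} \quad \nabla(G) \cong \nabla(G_{ab}). $$
Because the direct product of two left-orderable groups is left-orderable (take any lexicographic order coming from left-orders on the two factors), it suffices to prove separately that $G \wedge G$ and $\nabla(G_{ab})$ are left-orderable.

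For the exterior factor I would invoke the two ingredients already assembled in the paper. First, the hypotheses of Theorem~\ref{univ-ord} are satisfied verbatim here ($G$ is circularly orderable, $H_1(G)$ is free abelian, $H_2(G)$ is torsion-free), so the universal central extension $G_u$ is left-orderable. Then $G_u$ is a left-orderable central extension of $G$, $H_2(G)$ is still torsion-free, so Theorem~\ref{ExtOrd} applied with $K = G_u$ yields left-orderability of $G \wedge G$.

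For the $\nabla$-factor, write $H_1(G) \cong \Z^k$. Since $G_{ab}$ is abelian, the conjugation actions appearing in the defining relations of the non-abelian tensor product are trivial, so the relations reduce to $\Z$-bilinearity and $G_{ab} \tensor G_{ab}$ agrees with the ordinary tensor product $G_{ab} \tensor_{\Z} G_{ab} \cong \Z^{k^2}$. Then $\nabla(G_{ab})$, being the subgroup of this free abelian group generated by the diagonal symbols $\{a \tensor a \mid a \in G_{ab}\}$, is itself free abelian, hence left-orderable. Combining with the previous step gives a left order on $G \tensor G$.

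The main potential obstacle is the passage from the non-abelian tensor square of $G_{ab}$ to the ordinary abelian tensor product; one must verify that the Brown--Loday tensor square of an abelian group is itself abelian and coincides with $\otimes_{\Z}$. This is standard (the commutator identity $[a \tensor b, c \tensor d] = ({}^{[a,b]}c \tensor {}^{[a,b]}d)(c \tensor d)^{-1}$ forces the group to be abelian once $G_{ab}$ is, after which the defining relations read as bilinearity), but it is the one step that is not immediate from the earlier results in the paper. Everything else is just assembly of Theorems~\ref{TensorBFM}, \ref{univ-ord} and~\ref{ExtOrd}.
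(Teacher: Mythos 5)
Your proposal is correct and follows essentially the same route as the paper: both reduce via Theorem~\ref{TensorBFM} to the two pieces $\nabla(G)\cong\nabla(G_{ab})$ (free abelian as a subgroup of $G_{ab}\tensor G_{ab}$) and $G\wedge G$ (left-orderable by Theorem~\ref{ExtOrd} applied to the universal central extension from Theorem~\ref{univ-ord}), then assemble. Your write-up actually makes explicit two points the paper leaves implicit --- which central extension $K$ feeds into Theorem~\ref{ExtOrd}, and why the Brown--Loday tensor square of an abelian group is the ordinary $\tensor_{\Z}$ --- so no gaps.
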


\begin{proof}
Since abelianization $ G^{ab} = \Z^n$, by Theorem~\ref{TensorBFM}, we have $\nabla(G) \cong \nabla(G^{ab})$ and $\nabla(G)$ is a subgroup in $G^{ab} \tensor G^{ab}$ which is free abelian.  $G \tensor G$ is an extension of $G \wedge G$ by $\Z$ and $G \wedge G$ is orderable by Theorem~\ref{ExtOrd}. Therefore $G \tensor G$ is orderable.
\end{proof}
\begin{lemma}\label{torsion_element} If for some $a \in G$ we have $a^n = e$ and $[a, g]$ = e, then $(a \tensor g)^n = e$ in $ G \tensor G $.

\end{lemma}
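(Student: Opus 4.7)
The plan is to use the defining relations of the tensor square to show that the map $a \mapsto a \tensor g$ (from the cyclic subgroup $\langle a \rangle$ into $G \tensor G$) is essentially a homomorphism under the hypothesis $[a,g] = e$, and then invoke $a^n = e$.

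First I would establish the identity
$$a^k \tensor g = (a \tensor g)^k$$
by induction on $k$. The base case $k=1$ is trivial. For the inductive step, apply the first defining relation $gh \tensor k = ({}^gh \tensor {}^gk)(g \tensor k)$ with the substitution $g \leftarrow a^k$, $h \leftarrow a$, $k \leftarrow g$, giving
$$a^{k+1} \tensor g = ({}^{a^k}a \tensor {}^{a^k}g)(a^k \tensor g).$$
Now ${}^{a^k}a = a$ is automatic, and ${}^{a^k}g = g$ follows from $[a,g]=e$, which propagates to $[a^k,g]=e$. Thus the right-hand side collapses to $(a \tensor g)(a^k \tensor g)$, and the induction hypothesis finishes the step.

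Next, I would verify the auxiliary fact that $e \tensor g = e$ in $G \tensor G$. This drops out of the same relation with $g = h = e$, $k \leftarrow g$: we get $e \tensor g = (e \tensor g)(e \tensor g)$, so $e \tensor g$ is idempotent in a group and therefore equals the identity.

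Combining the two, since $a^n = e$, we conclude
$$(a \tensor g)^n = a^n \tensor g = e \tensor g = e,$$
as required. There is no real obstacle here; the only subtlety is noticing that one must use \emph{both} hypotheses at the inductive step ($a^n = e$ to kill the exponent at the end, and $[a,g]=e$ to ensure that the conjugation factors in the tensor relation disappear cleanly).
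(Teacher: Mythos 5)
Your proof is correct and follows essentially the same route as the paper: both expand $a^{k+1}\tensor g$ via the first defining relation and use $[a,g]=e$ (hence $[a^k,g]=e$) to kill the conjugation twists, yielding $a^n\tensor g=(a\tensor g)^n$. You are slightly more careful than the paper in explicitly verifying $e\tensor g=e$, which the paper's one-line computation takes for granted.
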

\begin{proof}
Since $(a \tensor g)$ is central in $G \tensor G$, by relations in the tensor product, we have
	$$ e = a^n \tensor g = aa^{n-1} \tensor g = ({}^a(a^{n-1}) \tensor {}^ag) (a \tensor g) = (a^{n-1} \tensor g)  (a \tensor g) = (a \tensor g)^n$$
\end{proof}
\begin{corollary} Let $G$ be a group such that $H_1(G)$ is finitely generated free abelian and $H_2(G)$ is torsion-free. If there is a torsion element $a \in G$ and $a \tensor a \neq e \in G \tensor G$, then 
$G$ is not circularly orderable.
\end{corollary}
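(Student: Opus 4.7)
The plan is a straightforward contradiction argument combining the orderability result for tensor squares (Theorem~\ref{TensorOrd}) with the torsion lemma (Lemma~\ref{torsion_element}). So I would assume for contradiction that $G$ is circularly orderable, with the goal of producing a nontrivial torsion element in $G \tensor G$, which is impossible in a left-orderable group.

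First I would invoke Theorem~\ref{TensorOrd}: the hypotheses on $H_1(G)$ (finitely generated free abelian) and $H_2(G)$ (torsion-free), together with the assumed circular orderability of $G$, give that $G \tensor G$ is left-orderable. In particular $G \tensor G$ is torsion-free, so it contains no nontrivial element of finite order.

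Next I would exhibit a torsion element. Since $a \in G$ satisfies $a^n = e$ for some $n \geq 1$, and obviously $[a,a] = e$, Lemma~\ref{torsion_element} applied with $g = a$ yields $(a \tensor a)^n = e$ in $G \tensor G$. By hypothesis $a \tensor a \neq e$, so this is a nontrivial element of finite order in $G \tensor G$, contradicting left-orderability. Therefore $G$ cannot be circularly orderable.

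There is essentially no obstacle here; the corollary is just the contrapositive repackaging of Theorem~\ref{TensorOrd} combined with Lemma~\ref{torsion_element}. The only minor subtlety to mention is the standard fact that left-orderable groups are torsion-free, which I would state in one line.
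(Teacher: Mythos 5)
Your proposal is correct and follows exactly the paper's own argument: assume circular orderability, apply Theorem~\ref{TensorOrd} to get left-orderability of $G \tensor G$, then use Lemma~\ref{torsion_element} with $g = a$ (noting $[a,a]=e$) to produce the nontrivial torsion element $a \tensor a$, contradicting torsion-freeness of left-orderable groups. No differences worth noting.
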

\begin{proof}
If $G$ is circularly orderable, then, by Theorem~\ref{TensorOrd}, $G \tensor G$ is left-orderable. By Lemma~\ref{torsion_element}, $a \tensor a$ is a torsion element in $G \tensor G$, which is an obstruction for a group to be left-orderable.
\end{proof}
A \emph{crystallographic group} is a group which contains a
finitely generated maximal abelian torsion-free subgroup of finite index. A
\emph{Bieberbach group} is a crystallographic group which is itself torsion-free. For an introduction to Bieberbach groups see~\cite{Charlap}. 

\begin{theorem}\label{bieberbach} Let $G$ be a group defined by a central extension
	\FloatBarrier
	\begin{figure}[!ht]
	\centering
	\begin{tikzcd}[ampersand replacement=\&]
	1 \arrow[r] \&  A \arrow[r, ""] \& G \arrow[r, ""] \& H \arrow[r] \& 1
	\end{tikzcd}
	\end{figure}
\FloatBarrier
\noindent
where $A$ is finitely generated, $H$ is finite. Then $ G \tensor G $ either has torsion or it is a Bieberbach group of a dimension $k \leq 2 \cdot \rank \ (A \tensor_\Z G_{ab})$.
\end{theorem}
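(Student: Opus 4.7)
The plan is to realize $G \tensor G$ as an extension of a finite group by a finitely generated central abelian subgroup of bounded rank; once this is done, torsion-freeness immediately yields a Bieberbach group of the asserted dimension. Note first that $G$ is finitely generated (since $A$ is and $[G:A]=|H|<\infty$), whence $G\tensor G$ is finitely generated as well.

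Let $M \leq G \tensor G$ denote the subgroup generated by $\{a \tensor g,\ g \tensor a : a\in A,\ g\in G\}$. The key claim is that $M$ is central, finitely generated abelian, of rank at most $2\cdot\rank(A\tensor_\Z G_{ab})$, and that $(G\tensor G)/M\cong H\tensor H$. Centrality of $M$ is forced by the commutator identity for the tensor square
\[ [x\tensor y,\ x'\tensor y'] = {}^{[x,y]}(x'\tensor y')\cdot (x'\tensor y')^{-1}, \]
coming from the crossed module structure $G\tensor G\to G$: for each generator of $M$ one of the inner factors lies in the central subgroup $A$ of $G$, so $[x,y]=e$ and the commutator vanishes. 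Granted centrality, the defining relations of the non-abelian tensor square simplify on $M$ to the bilinear relations of the abelian tensor product: the pairing $(a,g)\mapsto a\tensor g$ becomes biadditive and annihilates commutators in the second variable (since $a\tensor[g_1,g_2]$ collapses to the identity in abelian $M$), hence factors through $A\tensor_\Z G_{ab}$, and analogously for $(g,a)\mapsto g\tensor a$. Thus $M$ is a quotient of the finitely generated abelian group $(A\tensor_\Z G_{ab})\oplus(G_{ab}\tensor_\Z A)$, whose rank equals $2\cdot\rank(A\tensor_\Z G_{ab})$. Functoriality of the tensor square applied to $G\twoheadrightarrow H$ yields the identification $(G\tensor G)/M\cong H\tensor H$.

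Since $H$ is finite, so is $H\tensor H$, and consequently $G\tensor G$ is a finitely generated virtually abelian group with $M$ a normal abelian subgroup of finite index. If $G\tensor G$ has no torsion, then $M$ is a torsion-free finitely generated abelian subgroup of finite index of rank at most $2\cdot\rank(A\tensor_\Z G_{ab})$, which makes $G\tensor G$ a Bieberbach group of the claimed dimension; otherwise $G\tensor G$ has torsion. The main technical obstacle is verifying that the non-abelian tensor-square relations degenerate to bilinear abelian ones on $M$ once centrality is established, and the accompanying identification of the quotient with $H\tensor H$.
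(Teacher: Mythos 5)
Your proposal is correct and takes essentially the same route as the paper: both arguments rest on the exact sequence $(A\tensor G)\times(G\tensor A)\to G\tensor G\to H\tensor H\to 1$ for a central extension, the identification $A\tensor G\cong A\tensor_\Z G_{ab}$ (valid because the mutual actions are trivial), and the observation that the resulting central finitely generated abelian subgroup $M$ has finite index and rank at most $2\cdot\rank(A\tensor_\Z G_{ab})$, so that torsion-freeness forces $G\tensor G$ to be Bieberbach of that dimension (the paper just additionally enlarges $M$ to a maximal abelian subgroup of the same rank to match its stated definition of a crystallographic group). One caveat: your opening claim that $G$ finitely generated implies $G\tensor G$ finitely generated is false in general --- $F_2\tensor F_2$ surjects onto the infinitely generated commutator subgroup $F_2'$ --- but this is harmless here, since finite generation already follows from your extension of the finite group $H\tensor H$ by the finitely generated abelian group $M$.
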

\begin{proof}
For a central extension there is an exact sequence
\FloatBarrier
\begin{figure}[!ht]
\centering
\begin{tikzcd}[ampersand replacement=\&]
 (A \tensor G) \times (G \tensor A) \arrow[r, ""] \& G \tensor G \arrow[r, ""] \& H \tensor H \arrow[r] \& 1
\end{tikzcd}
\end{figure}
\FloatBarrier
$H$ is finite, so $H \tensor H$ is finite. $A$ is central, so $A$ and $G$ act trivially on each other.
This implies $A \tensor G = A \tensor_\Z G_{ab}$, $G \tensor A = G_{ab} \tensor_\Z A $. These groups are finitely generated abelian as tensor products of such groups. Both $H \tensor H$ and $A \tensor G$ are Noetherian as finite group and finitely generated abelian group correspondingly. Then $G \tensor G$ is Noetherian and hence finitely generated, because Noetherian groups are closed with respect to extensions by Noetherian groups. Suppose $G \tensor G$ is torsion-free. Denote by $ D $ the image of $(A \tensor G) \times (G \tensor A)$. $D$ is a torsion-free finitely generated subgroup of finite index in $G \tensor G$. Let $ L $ be a maximal abelian subgroup of $G$ that contains $D$. By Lagrange's theorem, $D$ is a finite index subgroup in $L$. Thus $\rank(D) = \rank(L)$. We obtain the following exact sequence
\FloatBarrier
\begin{figure}[!ht]
\centering
\begin{tikzcd}[ampersand replacement=\&]
1 \arrow[r] \& L \arrow[r] \& G \arrow[r] \& Q \arrow[r] \& 1
\end{tikzcd}
\end{figure}
\FloatBarrier\noindent
where $Q$ is finite and $L$ is maximal finitely generated torsion-free abelian subgroup of $G$, so $G$ is a Bieberbach group of a dimension $k = \rank(L)$. Since $ k $ is also a rank of the image, it is not greater then $2 \cdot \rank(A \tensor_\Z G_{ab}).$
\end{proof}

\begin{corollary} \label{bib_col} Let $G$ be group satisfying Theorem~\ref{bieberbach} and $G_{ab} \simeq A \simeq \Z$. Then $G \tensor G$ is torsion-free if and only if $G$ is isomorphic to $ \Z $ and $H \tensor H$ is cyclic or $G \simeq \Z \oplus \Z$.
\end{corollary}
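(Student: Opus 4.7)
The plan is to combine the structural splitting of Theorem~\ref{TensorBFM} with the dimension bound of Theorem~\ref{bieberbach}, and then read off which Bieberbach groups can appear as $G\tensor G$. First I would note that $G_{ab}\cong\Z$ has no $2$-torsion, so Theorem~\ref{TensorBFM} applies and yields
\begin{equation*}
G\tensor G\ \cong\ \nabla(G)\times(G\wedge G)\ \cong\ \nabla(G_{ab})\times(G\wedge G)\ \cong\ \Z\times(G\wedge G),
\end{equation*}
using $\nabla(\Z)\cong\Gamma(\Z)\cong\Z$. Thus $G\tensor G$ is torsion-free exactly when $G\wedge G$ is, and whenever it is torsion-free it carries a distinguished $\Z$-direct summand.

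Next I would invoke Theorem~\ref{bieberbach}. Here $A\tensor_{\Z}G_{ab}\cong\Z\tensor_{\Z}\Z\cong\Z$, so the rank bound gives: if $G\tensor G$ is torsion-free then it is a Bieberbach group of dimension at most $2$. The Bieberbach groups of dimension $\le 2$ are exhausted by $1$, $\Z$, $\Z^{2}$, and the Klein-bottle group $K$. The group $K$ has abelianization $\Z\oplus\Z/2$, so it cannot be of the form $\Z\times X$ for any $X$: such a splitting would force $X_{ab}\cong\Z/2$, but the only $0$- or $1$-dimensional Bieberbach groups are $1$ and $\Z$, with abelianization $0$ and $\Z$ respectively. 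Combined with the distinguished $\Z$-summand from the previous step, this forces $G\tensor G\in\{\Z,\Z\oplus\Z\}$.

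Finally I would analyse the two cases. If $G\tensor G\cong\Z$, then $G\wedge G=1$; the short exact sequence $1\to H_{2}(G)\to G\wedge G\to[G,G]\to1$ from the tensor-exterior diagram forces $[G,G]=1$ and $H_{2}(G)=0$, so $G$ is abelian and $G\cong G_{ab}\cong\Z$. Then the hypothesised central extension becomes $1\to\Z\to\Z\to H\to1$, so $H\cong\Z/n$, and the right-exact sequence
\begin{equation*}
(A\tensor G)\times(G\tensor A)\ \to\ G\tensor G\ \to\ H\tensor H\ \to\ 1
\end{equation*}
from the proof of Theorem~\ref{bieberbach} exhibits $H\tensor H$ as a cyclic quotient of $\Z$. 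Conversely, for $G\cong\Z$ the non-abelian tensor square reduces to $\Z\tensor_{\Z}\Z\cong\Z$, which is torsion-free; and $G\tensor G\cong\Z\oplus\Z$ is torsion-free by inspection, matching the second clause.

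The main obstacle is the exclusion of the Klein-bottle alternative in the middle paragraph: it relies crucially on the direct-product (not just central-extension) formulation of Theorem~\ref{TensorBFM}, since a mere central copy of $\Z$ would not rule $K$ out. A secondary subtlety is checking that in the case $G\tensor G\cong\Z\oplus\Z$ one indeed has $G\wedge G\cong\Z$ with $[G,G]\cong\Z$ and $H_{2}(G)=0$ (otherwise the extension $1\to H_{2}(G)\to G\wedge G\to[G,G]\to 1$ would have no torsion-free middle term of rank one), which I would verify by ruling out the other numerical possibilities in that extension.
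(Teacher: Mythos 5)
Your proof is correct and takes a genuinely different route from the paper's --- and it actually proves slightly more. The paper's argument stays inside the exact sequence $(A\tensor G)\times(G\tensor A)\to G\tensor G\to H\tensor H\to 1$: the image $D$ of the first map is a torsion-free abelian finite-index subgroup of rank at most $2$; the case $D=0$ is excluded by the surjection $G\tensor G\to G_{ab}\tensor_\Z G_{ab}\cong\Z$; the case $D\cong\Z$ gives $G\tensor G\cong\Z$ with $H\tensor H$ cyclic; and the case $D\cong\Z\oplus\Z$ is settled only up to the two-dimensional classification, i.e.\ the paper's proof leaves open whether $G\tensor G$ could be the Klein bottle group (that possibility is removed only later, and only for $G_3$, by a separate nilpotency argument). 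You instead bring in Theorem~\ref{TensorBFM}: since $G_{ab}\cong\Z$ has no $2$-torsion, $G\tensor G\cong\nabla(G_{ab})\times(G\wedge G)\cong\Z\times(G\wedge G)$, and a direct $\Z$-factor is incompatible with the Klein bottle group. This buys you the corollary exactly as stated (the Klein bottle alternative is eliminated once and for all) and disposes of the degenerate case $D=0$ for free; it would also streamline the paper's subsequent treatment of $G_3$. The one step you should tighten is the exclusion itself: to see that the complement $X$ in a putative splitting $K\cong\Z\times X$ must be trivial or $\Z$, note that $X$ embeds in $K$, hence is torsion-free, and has Hirsch length $1$, so $X$ is $1$ or $\Z$, neither of which has abelianization $\Z_2$. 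Your closing case analysis ($G\wedge G=1$ forces $[G,G]=1$ and $G\cong G_{ab}\cong\Z$, whence $H$ is finite cyclic and $H\tensor H$ is a cyclic quotient of $\Z$) agrees with the paper's conclusions in the case $D\cong\Z$, and the converse direction is as immediate for you as it is for the paper.
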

\begin{proof} Denote by $D$ the image of $(A \tensor G) \times (G \tensor A)$ in $G$. If $G \tensor G$ has no torsion, then $D$ is equal to $0$, $\Z$, $\Z \oplus \Z$ or a Klein bottle group. If $D = 0$, then $G \tensor G $ is isomorphic to $H \tensor H$ and is finite. This contradicts with the fact that it maps surjectively on $ G_{ab} \tensor_\Z G_{ab} \simeq \Z $. If $ D = \Z $, then $G\tensor G$ is torsion-free and has a cyclic subgroup of finite index, so it is cyclic and $H \tensor H$ is cyclic as an image of $G \tensor G$. Finaly, when $ D \simeq \Z \oplus \Z$, it is known that the only Bieberbach groups of a dimension 2 are $\Z \oplus \Z$ and a fundamental group of a Klein bottle.
\end{proof}

\section{Virtual knot groups}
Theory of virtual knots and links was introduced by Kauffman in \cite{Kau99} as a generalization
of classical knot theory. One can define a virtual knot as an equivalence class of diagrams, which are 4-regular planar graphs with additional information in its vertices, corresponding to type of a crossing. Additionaly to classical crossings there appears a new one, which is called virtual and is depicted by a circle. For an introduction and diagramatic approach see \cite{Kau12}.
\FloatBarrier
 \begin{figure}[ht]
\centering
\includegraphics[height=1.2cm]{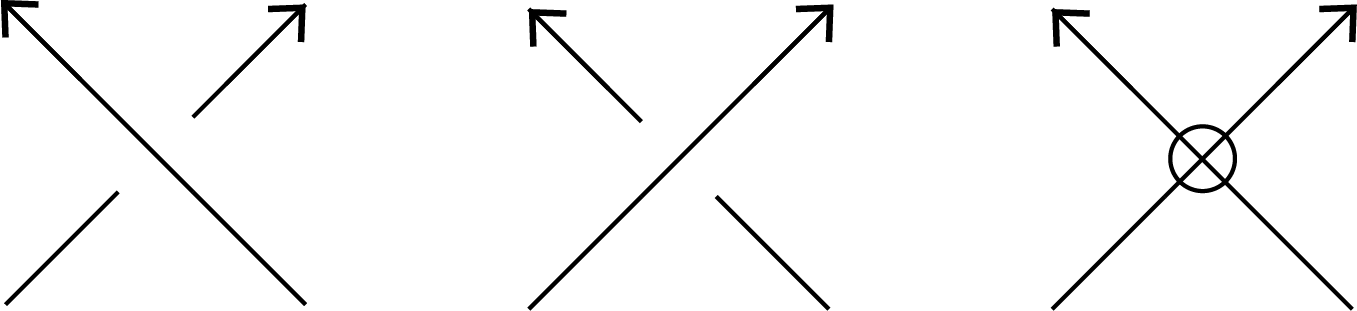}
\caption{Classical and virtual crossings.	}
\end{figure}
\FloatBarrier
When studying knots, one is usually interested in knot invariants. A group of a virtual knot is one of them. There are different group valued invariants of virtual knots, see \cite{VKG}.
Group $G_K$ of a virtual knot $K$ is defined by taking some diagram $D$ of a virtual knot $K$, assigning a generator to each arc in the diagram $D$ and imposing following relations for every crossing:
\FloatBarrier
 \begin{figure}[!ht]
\centering
\includegraphics[height=2.3cm]{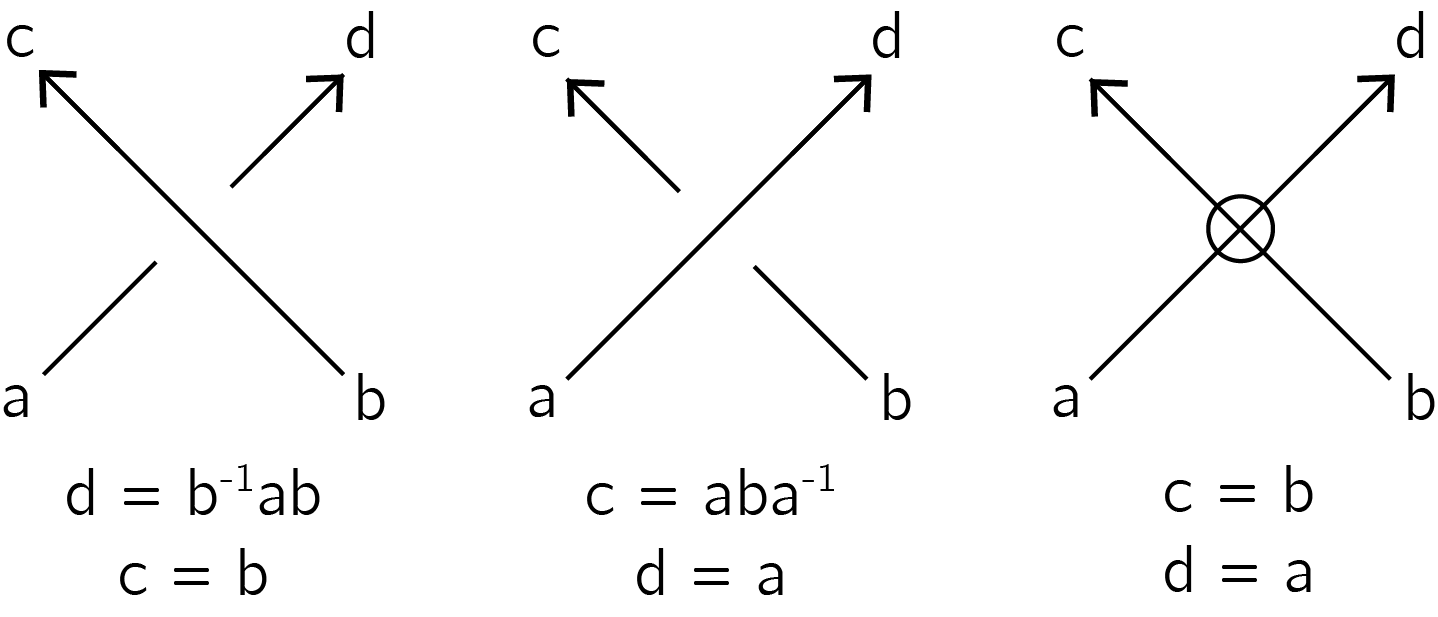}
\end{figure}
\FloatBarrier
The definition does not depend on a diagram and for classical knots the group $ G_K $ coincides with a fundamental group of a complement of a knot in $S^3$.
\begin{definition} Presentation $F/R = \langle x_1, \ ... \, \ x_n \ | \ r_1, ... r_m\rangle$  is called Wirtinger, if every $r_k$ has a form
	$$ x_i^{-1} \omega_k x_j \omega^{-1}_k $$
where $\omega_k$ is some word in $F$.
\end{definition}
Note that, by the construction, $(G_K)_{\text{ab}} = \Z$ and $G_K$ always has a Wirtinger presentation with the equal number of generators and relators.
\begin{definition} Deficiency of a group presentaion $\Gamma = \langle x_1, \ ... \, \ x_n \ | \ r_1, ... r_m\rangle$ is the number of generators minus the number of relators.
	$$ \text{def}(\Gamma) = n - m$$
\end{definition}
\begin{theorem}[Theorem 3, \cite{SGK}] Any Wirtinger presentation of deficiency 0 or 1 where all generators are conjugate can be realized as a virtual knot group.
\end{theorem}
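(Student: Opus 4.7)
The plan is to start from an arbitrary Wirtinger presentation $\Gamma = \langle x_1,\ldots,x_n \mid r_1,\ldots,r_m \rangle$ with $n-m \in \{0,1\}$ and all $x_i$ mutually conjugate, and to produce a virtual knot diagram whose Wirtinger presentation, after Tietze reduction, is isomorphic to $\Gamma$. The main observation is that a single classical crossing of a virtual knot diagram only ever produces a relation of the elementary form $x_i = y^{\pm 1} x_j y^{\mp 1}$, where $y$ is a single generator (the label of the over-strand). Consequently the task splits into an algebraic simplification step and a purely diagrammatic step.

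For the algebraic step, I would replace each relation $x_i^{-1}\omega_k x_j \omega_k^{-1}$ with $\omega_k = y_{k,1}^{\epsilon_{k,1}}\cdots y_{k,\ell_k}^{\epsilon_{k,\ell_k}}$ by a chain of $\ell_k$ elementary conjugation relations, introducing $\ell_k-1$ fresh generators $z_{k,1},\ldots,z_{k,\ell_k-1}$ that are defined successively as conjugates of $x_j$ by sub-words of $\omega_k$. This Tietze transformation preserves the deficiency (each new generator is accompanied by exactly one defining relation) and preserves the property that all generators are mutually conjugate. After this reduction every relation of the new presentation corresponds to exactly one classical crossing that I can draw.

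For the diagrammatic step, I would draw one oriented arc for each generator and, for each elementary relation $x_i = y^{\epsilon} x_j y^{-\epsilon}$, insert a classical crossing whose over-strand is $y$ with sign matching $\epsilon$, whose incoming under-arc is $x_j$ and outgoing under-arc is $x_i$. The spatial routing needed to bring the correct arcs into the correct positions is handled by freely inserting virtual crossings, which by definition contribute no group-theoretic relations. Finally, the arcs are closed into a single $4$-valent planar graph: in the deficiency-$1$ case the free ends are joined directly, adding no relation; in the deficiency-$0$ case one more classical crossing is used to close the last gap, realising the $n$-th relation.

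The main obstacle is the single-component requirement: the constructed object must be a virtual \emph{knot}, not a link. The hypothesis that all $x_i$ are conjugate in $\Gamma$ is the necessary condition for this, because meridians of different components of a virtual link are never conjugate. I would argue sufficiency by ordering the elementary relations into a sequence $x_{i_0}\to x_{i_1}\to\cdots\to x_{i_N}$ that visits every generator (which is possible precisely because the conjugacy class is a single one) and then drawing the strands so that this sequence traces out a connected closed curve. This combinatorial ordering, together with an explicit routing via virtual crossings, constitutes the technical heart of the proof.
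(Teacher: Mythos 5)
This statement is not proved in the paper at all: it is quoted verbatim from Theorem~3 of the cited reference \cite{SGK}, so there is no in-paper argument to compare against, and your proposal has to be judged against the realization argument in that source. Your outline does follow the standard strategy (Tietze-reduce each relator $x_i^{-1}\omega_k x_j\omega_k^{-1}$ to a chain of elementary conjugations without changing the deficiency, realize each elementary conjugation by one classical crossing, and use virtual crossings to dispose of all planarity constraints on the routing), and those parts are sound.

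The genuine gap is exactly at the place you yourself label ``the technical heart'': the single-component issue is not just a matter of ordering the relations. In a closed virtual knot diagram each arc terminates at exactly one undercrossing and originates at exactly one undercrossing, so the elementary relations must induce a \emph{permutation} of the generators (each generator occurring exactly once as the under-incoming label $x_j$ and exactly once as the under-outgoing label $x_i$), and the diagram is a knot rather than a link precisely when that permutation is a single $n$-cycle. A general Wirtinger presentation fails this on both counts: a generator may be the target (or source) of several relations, or of none, and even after repairing that the resulting permutation may decompose into several cycles. Repairing it requires inserting additional generator--relation pairs given by \emph{trivial} elementary conjugations (e.g.\ a new generator $x'$ with $x'=x\,x\,x^{-1}$, realized by an arc crossing under itself), which preserve the group and the deficiency but change the incidence structure; and merging distinct cycles into one is where the hypothesis that all generators are conjugate is actually consumed, via further such insertions along a conjugating word. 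Your sketch asserts that a suitable ordering ``is possible precisely because the conjugacy class is a single one'' but gives no mechanism, and the parenthetical justification you do offer (non-conjugacy of meridians of distinct components) argues the converse, necessity direction rather than the realizability claimed by the theorem. As written, the construction would in general produce a virtual link or an inconsistent arc assignment, so the proof is incomplete at its decisive step.
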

A famous theorem of Howie and Short \cite{HS} states that groups of classical knots are left-orderable. However, it is not true for virtual knots. For example, the group of a knot 3.7 from J.~Green's table \cite{Green} contains torsion and hence is not left-orderable. Moreover there is a family of virtual knot groups, constructed by T.~Maeda.
\begin{example}[Maeda \cite{Maeda}] If G is a finitely presented group such that $G_{ab} = 
\Z$ and $G' = \Z_m$, then G is isomorphic to a group 
	$$ \langle x,\  a \ | \ a^m = 1, x^{-1}ax=a^n\rangle $$ for some integer $n$ with $(m,n) = (n-1, m) = 1$. Putting $y = xa^{-1}$ the group G becomes
$$G=\langle x, y \ | \ y = (y^{-1}x)^{-q}x(y^{-1}x)^q ,x = (y^{-1}x)^{-m}x(y^{-1}x)^{m}\rangle$$
where $q(n-1) + rm = 1$ ($q, r \in \Z$).
\end{example}
A universal extension of a virtual knot group is a Wirtinger group (Proposition 2, \cite{Kuz}), so universal extensions of virtual knot groups may give new examples of virtual knot groups. We have the following result.
\begin{proposition} Universal extension of a virtual knot group with nontrivial finite second homology is not circularly orderable.
\end{proposition}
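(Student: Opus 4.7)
The plan is to apply the opening proposition of Section~2 (circular orderability is equivalent to left-orderability when $G_{ab}$ is free abelian and $H_2(G)$ is finite) to the universal extension $G_u$ itself, and then derive a contradiction from the presence of central torsion in $G_u$.

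First I would verify the two hypotheses for $G_u$. Virtual knot groups have $G_{ab}=\Z$, and $H_2(G)$ is finite by assumption, so Lemma~\ref{spectral_seq} gives that $H_2(G_u)$ is finite. For the abelianization I would run the five-term homology exact sequence of the central extension $1\to H_2(G)\to G_u\to G\to 1$:
\[
H_2(G_u)\longrightarrow H_2(G)\xrightarrow{\ \tau\ } H_2(G)\longrightarrow (G_u)_{ab}\longrightarrow G_{ab}\longrightarrow 0.
\]
The transgression $\tau$ classifies the extension in $H^2(G,H_2(G))$; the universal property of $G_u$ stated earlier identifies this class, via the universal coefficients isomorphism $H^2(G,H_2(G))\cong \mathrm{Hom}(H_2(G),H_2(G))$ (available because $G_{ab}$ is free abelian), with the identity of $H_2(G)$. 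So $\tau$ is an isomorphism, and exactness forces $(G_u)_{ab}\xrightarrow{\sim}G_{ab}=\Z$, which is free abelian.

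With both hypotheses in place, the opening proposition of Section~2 applied to $G_u$ gives that $G_u$ is circularly orderable if and only if it is left-orderable. But $H_2(G)\hookrightarrow G_u$ is a nontrivial finite central subgroup, so $G_u$ contains torsion and therefore cannot be left-orderable. Hence $G_u$ is not circularly orderable.

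The only nonformal point is identifying the transgression with the identity map of $H_2(G)$; everything else is a direct assembly of lemmas from Section~2 together with the standard fact that left-orderable groups are torsion-free.
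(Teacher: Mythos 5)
Your proof is correct and follows essentially the same route as the paper: verify that $G_u$ has free abelian abelianization and finite second homology (via Lemma~\ref{spectral_seq}), invoke the opening proposition of Section~2 to upgrade circular orderability to left-orderability, and contradict this with the torsion coming from the central copy of $H_2(G)$. The only difference is that you justify $(G_u)_{ab}\cong\Z$ via the five-term sequence and the transgression, a step the paper asserts without proof.
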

\begin{proof} 
Universal extension $\overline{G}$ is a group with $H_1(\overline{G}) = \Z$ and, by Lemma~\ref{spectral_seq}, finite $H_2(\overline{G})$. Since it has $H_2(G)$ as a finite subroup, it is not left-orderable and hence it is not circularly orderable.
\end{proof}
In \cite{Ich} A. Ichimori computed groups of virtual knots in J.~Green's table and showed, that each group is isomorphic to either an infinite cyclic group or to some group $G_i$ from the following list:
$$
\begin{aligned}
& G_1=\langle x, y \mid x y x=y x y\rangle, \\ 
& G_2=\left\langle x, a \mid x^{-1} a x=a^{-1}, a^3 = 1	\right\rangle, \\
& G_3=\left\langle x, y \mid x y x=y x y, y=x^{-3} y x^3\right\rangle, \\
& G_4=\left\langle x, y \mid x^{-1} y x y^{-1} x=y x^{-1} y x y^{-1}\right\rangle, \\
& G_5=\left\langle x, a \mid x^{-1} a x=a^{-1}, a^5=1\right\rangle, \\
& G_6=\left\langle x, a \mid x^{-1} a x=a^2, a^5=1\right\rangle, \\
& G_7=\left\langle x, y \mid x^{-1} y x=y^{-1} x y\right\rangle, \\
& G_8=\left\langle x, a \mid x^{-1} a x=a^2, a^7=1\right\rangle, \\
& G_9=\left\langle x, y \mid x y x y^{-1} x^{-1}=y x^{-1} y x y^{-1}, x^{-1} y^{-1} x y x=y^{-1} x^{-1} y x y\right\rangle .
\end{aligned}
$$
$ G_1 $ is a trefoil knot group, $ G_4$ is a figure-eight knot group, hence they are left-orderable. So is $G_7$ as a group with one relation which is not torsion \cite{Brodskii}.
\begin{proposition} $G_3$ is not circularly orderable.
\end{proposition}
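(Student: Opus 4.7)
The plan is to realise $G_3$ as a central extension of a finite group by $\Z$, deduce that $G_3 \tensor G_3$ has torsion via Theorem~\ref{bieberbach} and Corollary~\ref{bib_col}, and then invoke the contrapositive of Theorem~\ref{TensorOrd}.

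First I would observe that the second relation $y=x^{-3}yx^3$ makes $x^3$ commute with $y$, hence central in $G_3$. Abelianising, the braid relation forces $x\equiv y$, so $G_3^{ab}=\Z$; the image of $x^3$ is $3\neq 0$, so $\langle x^3\rangle \cong \Z$. The quotient
$$G_3/\langle x^3\rangle = \langle x,y\mid xyx=yxy,\ x^3\rangle$$
is the binary tetrahedral group $SL(2,3)$ of order $24$: the mod-$3$ reduction of the standard braid representation $B_3\to SL(2,\Z)$, $x\mapsto\bigl(\begin{smallmatrix}1&1\\0&1\end{smallmatrix}\bigr)$, $y\mapsto\bigl(\begin{smallmatrix}1&0\\-1&1\end{smallmatrix}\bigr)$, kills $x^3$ and lands surjectively onto $SL(2,3)$, while the orbifold interpretation of the presentation as $\pi_1^{\mathrm{orb}}$ of the Brieskorn sphere $\Sigma(2,3,3)$ shows injectivity, since that fundamental group is known to have order $24$. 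Hence $G_3$ fits into a central extension
$$1\to\Z\to G_3\to SL(2,3)\to 1,$$
and Theorem~\ref{bieberbach} applies with $A=\Z$, $H=SL(2,3)$. Because $G_3$ is non-abelian it is neither $\Z$ nor $\Z\oplus\Z$, and Corollary~\ref{bib_col} forces $G_3\tensor G_3$ to contain torsion.

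To invoke Theorem~\ref{TensorOrd} it only remains to verify that $H_2(G_3)$ is torsion-free, as $H_1(G_3)=\Z$ is already free abelian. I would run the Lyndon--Hochschild--Serre spectral sequence of the central extension above. Using $H_1(SL(2,3))=\Z/3$, $H_2(SL(2,3))=0$, $H_3(SL(2,3))=\Z/24$ and $H_q(\Z)=0$ for $q\ge 2$, the only potentially nonzero contribution to $H_2(G_3)$ is $E^{\infty}_{1,1}$, a subquotient of $\Z/3$. The transgressive differential $d_2\colon H_3(SL(2,3))\to H_1(SL(2,3))$ is cap product with the extension class $e\in H^2(SL(2,3),\Z)=\Z/3$; this class is nonzero because $G_3^{ab}=\Z$, not $\Z\oplus\Z/3$. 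Restricting $e$ to a Sylow $3$-subgroup $\Z/3\subset SL(2,3)$ gives a nonzero class (restriction is injective on the $3$-primary part), and for a cyclic group cap product with a nonzero class in $H^2$ is the periodicity isomorphism $H_3(\Z/3)\xrightarrow{\sim}H_1(\Z/3)$; naturality together with the fact that $3$-primary homology of $SL(2,3)$ is detected on its Sylow $3$-subgroup then forces $d_2$ to be surjective. Therefore $E^{\infty}_{1,1}=0$ and $H_2(G_3)=0$.

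Combining, $G_3\tensor G_3$ has torsion, $H_1(G_3)$ is free abelian, and $H_2(G_3)$ is torsion-free, so by the contrapositive of Theorem~\ref{TensorOrd}, $G_3$ is not circularly orderable. The main obstacle is the structural identification $G_3/\langle x^3\rangle\cong SL(2,3)$; once it is in place, the Bieberbach-type dichotomy of Corollary~\ref{bib_col} and the spectral-sequence computation of $H_2(G_3)$ are essentially routine.
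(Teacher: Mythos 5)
Your overall route is the same as the paper's: make $x^3$ central, identify $G_3/\langle x^3\rangle$ with the binary tetrahedral group $\hat{A_4}\cong SL(2,3)$, and derive a contradiction with Theorem~\ref{TensorOrd} by showing $G_3\otimes G_3$ has torsion. But there is a genuine gap at the step ``because $G_3$ is non-abelian it is neither $\Z$ nor $\Z\oplus\Z$, and Corollary~\ref{bib_col} forces $G_3\otimes G_3$ to contain torsion.'' You are reading the conclusion of Corollary~\ref{bib_col} as a condition on $G$ itself, but its proof (and the way the paper uses it) makes clear that the trichotomy concerns $G\otimes G$: if $G\otimes G$ is torsion-free it must be trivial, infinite cyclic, $\Z\oplus\Z$, or the Klein bottle group. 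Knowing that $G_3$ is non-abelian says nothing about which of these $G_3\otimes G_3$ could be, so the substantive work is still missing. The paper supplies it in two steps: the epimorphism $G_3\otimes G_3\to H\otimes H$ together with the computation $H\otimes H\cong\Z_3\times Q$ (with $Q$ the quaternion group, from~\cite{BJR}) rules out the abelian possibilities, and the Klein bottle group is excluded because $G_3\otimes G_3$ is a central extension of the nilpotent group $H\otimes H$, hence nilpotent, while the Klein bottle group is not. Without some version of these two arguments your proof does not close.

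A second remark: your spectral-sequence computation, if you keep it, makes the whole tensor-square detour unnecessary. Already on the $E^2$ page of the Lyndon--Hochschild--Serre sequence for $1\to\Z\to G_3\to SL(2,3)\to 1$ one sees that $H_2(G_3)$ is a subquotient of $H_1(SL(2,3))\cong\Z/3$ (since $H_2(SL(2,3))=0$), hence finite; there is no need for the delicate analysis of $d_2$ as cap product with the Euler class. Combined with $H_1(G_3)=\Z$, the paper's first Proposition of Section~2 then says $G_3$ is circularly orderable iff left-orderable, and $G_3$ is not left-orderable because it is virtually $\Z$ but not $\Z$, hence has torsion. The paper deliberately avoids pinning down $H_2(G_3)$ and instead splits into the cases $H_2$ finite (handled exactly as above) and $H_2\cong\Z$ (handled by the tensor square); you have effectively proved that the second case does not occur, so you should either finish by the short route or restore the missing torsion argument for $G_3\otimes G_3$.
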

\begin{proof}
$$G_3=\left\langle x, y \mid x y x=y x y, y=x^{-3} y x^3\right\rangle$$
Consider a quotient by a central subgroup $\langle x^3 \rangle$
$$H = G_3 / \langle x^3 \rangle = \left\langle x, y \mid x=y^{-1}x^{-1}y x y, x^3, y^3\right\rangle.$$
Group $H$ is isomorphic to a group $\hat{A_4} = \left\langle a, b \mid a^3 = b^3 = (ab)^2\right\rangle$
and isomorphism is given by a map 
$$ x \mapsto a^2, y \mapsto b^4.$$
Since $G $ has a cyclic subgroup of finite index and $ G $ is not cyclic, there is a torsion in $ G $. If $H_2(G)$ is finite, then $ G $ is not circularly orderable. Suppose $H_2(G) \cong \Z $. By Theorem~\ref{TensorOrd}, $G \tensor G$ is left-orderable if $G$ is circularly orderable.
We have a natural epimorphism 	
$$ G \tensor G \mapsto H \tensor H. $$
$H \tensor H$ was computed in \cite{BJR} and it is isomorphic to $ \Z_3 \times Q $, where $ Q $ is the quaternion group. 
Using Corollary~\ref{bib_col} we obtain that the only possibility for $G \tensor G$ to be torsion-free is to be isomorphic to a fundamental group of a Klein bottle, because it cannot be abelian since its image $H \tensor H$ is not abelian. Finally we notice that $G \tensor G$ cannot be a Klein bottle group either as it is a central extension of a nilpotent group $H \tensor H$ and hence it should be nilpotent while the Klein bottle group is not. Thus in this case $ G \tensor G $ must have torsion and so $G$ is not circularly orderable.
\end{proof}
\begin{proposition} Let $G$ be a circularly orderable group with a presentation
	$$ G = \langle x, y \ | \ y^n = 1, xyx^{-1} = y^{l}\rangle $$
Then $G$ is abelian.
\end{proposition}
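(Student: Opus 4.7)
The plan is to lift the defining relations of $G$ into the left-ordered central extension $G_{f}$ associated with a circular order $f$ on $G$, and then extract the required divisibility from torsion-freeness of $G_{f}$. Let $m \geq 1$ denote the order of $y$ in $G$ (so $m \mid n$). If $m = 1$, then $G = \langle x \rangle \cong \Z$ is already abelian, so we may assume $m \geq 2$. Suppose $G$ is circularly orderable with circular order $f$. As recalled in Section~2, we obtain a central extension
$$ 1 \to \Z \to G_{f} \to G \to 1 $$
with $G_{f}$ left-ordered and $z$ a positive cofinal central generator of the kernel. Pick lifts $\tilde x, \tilde y \in G_{f}$ of $x$ and $y$.

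Since $y^m = e$ in $G$, the element $\tilde y^m$ lies in $\langle z \rangle$, so $\tilde y^m = z^k$ for some $k \in \Z$. The relation $xyx^{-1} = y^l$ lifts to $\tilde x \tilde y \tilde x^{-1} = \tilde y^l z^{\epsilon}$ for some $\epsilon \in \Z$. Raising this identity to the $m$-th power, and using that $z$ is central, the left-hand side becomes $\tilde x \tilde y^m \tilde x^{-1} = z^k$, while the right-hand side becomes $\tilde y^{lm} z^{m\epsilon} = z^{lk + m\epsilon}$. Comparing exponents gives $m \mid k(l-1)$.

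The main technical step is to show $\gcd(k, m) = 1$. Write $d = \gcd(k, m)$, $m = d m'$, $k = d k'$. Then $(\tilde y^{m'})^{d} = \tilde y^{m} = z^{k} = (z^{k'})^{d}$, and since $z^{k'}$ is central, $\tilde y^{m'}$ and $z^{k'}$ commute. Hence $(\tilde y^{m'} z^{-k'})^{d} = e$; torsion-freeness of $G_{f}$ (which follows from left-orderability) implies $\tilde y^{m'} = z^{k'}$. Projecting back to $G$ yields $y^{m'} = e$, and since $y$ has order exactly $m$, we must have $m \mid m'$, forcing $d = 1$.

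Combining $\gcd(k, m) = 1$ with $m \mid k(l-1)$ yields $m \mid l - 1$, so $y^l = y$ in $G$ and the conjugation relation reduces to $xy = yx$. Therefore $G$ is abelian. I expect the only delicate point to be the uniqueness-of-commuting-roots step in the third paragraph, but this is resolved immediately by the identity $(\tilde y^{m'} z^{-k'})^{d} = (\tilde y^{m'})^{d}(z^{-k'})^{d} = e$ together with torsion-freeness of any left-orderable group; all other steps are straightforward manipulations in $G_{f}$.
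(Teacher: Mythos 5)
Your proof is correct, but it follows a genuinely different route from the paper's. The paper's argument is a two-line appeal to an external result: the subgroup $C=\langle y\rangle$ is finite (because $y^n=1$) and normal (because $xyx^{-1}=y^{l}$), and Proposition~2.10 of Clay--Ghaswala \cite{CG} says that a finite normal subgroup of a circularly orderable group is central, whence $[x,y]=1$ and $G\cong G_{\mathrm{ab}}$. You instead prove the needed centrality by hand, lifting the relations to the left-orderable central extension $G_{f}$ recalled in Section~2 and computing exponents of the central generator $z$. All your steps check out: the lifted relation $\tilde x\tilde y\tilde x^{-1}=\tilde y^{l}z^{\epsilon}$ is the correct general form since the kernel is $\langle z\rangle$; the $m$-th power computation gives $z^{k}=z^{kl+m\epsilon}$ and hence $m\mid k(l-1)$ because $z$ has infinite order; and the identity $(\tilde y^{m'}z^{-k'})^{d}=e$ is valid because $z^{k'}$ is central, so torsion-freeness of the left-orderable group $G_{f}$ forces $d=1$, giving $m\mid l-1$ and $y^{l}=y$. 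In effect you have reproved, in the special case of a cyclic finite normal subgroup whose normality is witnessed by a single conjugation relation, the instance of \cite{CG} Proposition~2.10 that the paper quotes. What your approach buys is self-containedness --- it uses only the construction of $G_{f}$ already set up in Section~2 plus the standard fact that left-orderable groups are torsion-free --- and it also sidesteps the slightly confusing opening line of the paper's proof (``Suppose $G$ is left-orderable''), which appears to be a slip for ``circularly orderable.'' What the paper's approach buys is brevity and the full strength of the cited proposition, which applies to arbitrary finite normal subgroups rather than only cyclic ones.
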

\begin{proof}
Suppose $G$ is left-orderable. A finite group $C$ generated by $ y $ is normal in G. Then, by Proposition~2.10 in \cite{CG}, the group $C$ is central in $G$, so $[x,y] = 1$. Then $G \cong G_\text{ab}$. 
\end{proof}
\begin{corollary}
	$ G_2, G_5, G_6, G_8 $ are not circularly orderable.
\end{corollary}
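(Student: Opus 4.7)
The plan is a direct application of the preceding Proposition. Each of $G_2, G_5, G_6, G_8$ is presented in the form $\langle x, a \mid a^n = 1,\ x^{-1}ax = a^l\rangle$ with $(n,l) = (3,-1),(5,-1),(5,2),(7,2)$ respectively. Replacing $x$ by $x^{-1}$ (a Tietze transformation) rewrites each as $\langle x, a \mid a^n = 1,\ xax^{-1} = a^{l'}\rangle$, where $l'$ is the inverse of $l$ modulo $n$ (so $l' = -1,-1,3,4$ respectively). This matches the hypothesis of the Proposition verbatim with $y = a$.

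If any $G_i$ were circularly orderable, the Proposition would force it to be abelian, so to conclude it suffices to exhibit each $G_i$ as non-abelian. I would do this by identifying $G_i$ with the semidirect product $\Z \ltimes \Z_n$ in which the generator of $\Z$ acts on $\Z_n$ by $a \mapsto a^{l'}$. This identification is immediate from the universal property: the relations defining $G_i$ are exactly those defining the semidirect product, so the obvious map is an isomorphism. In particular $a$ has order exactly $n$ (not a proper divisor), and since $l' \not\equiv 1 \pmod n$ in each of the four cases, $xax^{-1} = a^{l'} \neq a$, so $x$ and $a$ do not commute. Hence $G_i$ is non-abelian, contradicting the Proposition.

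The statement therefore follows with no substantive obstacle: the only point requiring care is the verification that the given presentations do not collapse the cyclic subgroup $\langle a \rangle$ to something smaller, but this is a routine consequence of the semidirect-product description.
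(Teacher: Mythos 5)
Your argument is correct and is exactly the one the paper intends (the corollary is stated there without proof, as an immediate consequence of the preceding proposition): each group is a nontrivial semidirect product $\Z \ltimes \Z_n$, hence non-abelian, so the proposition rules out circular orderability. One small slip worth noting: replacing $x$ by $x^{-1}$ leaves the exponent $l$ unchanged, whereas rewriting the relation $x^{-1}ax=a^l$ in terms of the original $x$ is what produces $l' \equiv l^{-1} \pmod{n}$ --- you conflated the two operations, but either normal form matches the proposition's hypothesis and the non-abelianness check is unaffected.
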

\begin{proposition} $G_9$ is left-orderable.
\end{proposition}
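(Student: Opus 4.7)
The plan is to establish left-orderability of $G_9$ by reducing to the one-relator case and then invoking Brodskii's theorem, as was done for $G_7$. First I would compute the abelianization: both defining relators have exponent sum vector $(1,-1)$ in $(x,y)$, so $(G_9)_{\text{ab}}\cong \Z$, generated by the common image of $x$ and $y$. In particular, neither relator is a proper power in the free group $F(x,y)$, since its exponent-sum vector is primitive in $\Z^2$.

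The central step is to verify that one of the two defining relators, say $R_2 = x^{-1}y^{-1}xyx\cdot (y^{-1}x^{-1}yxy)^{-1}$, already lies in the normal closure of the other in $F(x,y)$. Both relators are reduced words of length $10$ with matching exponent sums, and $R_2$ uses only the same two generators, so this reduces to a finite free-group computation: one looks for a product of conjugates of $R_1^{\pm 1}$ equal to $R_2$, or equivalently verifies $R_2=1$ inside the one-relator group $\langle x,y\mid R_1\rangle$ using the Magnus--Moldavanskii rewriting. I expect this step to be the technical core of the argument and the main obstacle, since unlike the obstruction results proved earlier in the paper there is no functorial shortcut that avoids the combinatorial calculation.

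Once $G_9$ has been rewritten as a one-relator group $\langle x,y\mid R_1\rangle$, the Brodskii theorem applies: a one-relator group whose defining relator is not a proper power is locally indicable, and hence left-orderable, which is exactly the desired conclusion. Should the one-relator reduction fail, the fallback plan is to exhibit $G_9$ as an HNN extension via the epimorphism $G_9\to \Z$ coming from the abelianization, showing that the kernel is a free group (by a Reidemeister--Schreier calculation) and that the stable-letter conjugation action preserves a left ordering on that free kernel; left-orderability of HNN extensions with left-orderable base and order-preserving stable letter would then finish the proof.
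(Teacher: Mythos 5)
Your argument hinges on a step you explicitly leave unproved, and that step is almost certainly false: there is no reason for one of the two defining relators of $G_9$ to lie in the normal closure of the other, and the structure of $G_9$ shows it does not. The paper's proof proceeds by analyzing the commutator subgroup directly: Ichimori's presentation
$$ G_9' = \langle a_k \ (k \in \Z) \mid a_{k+2} = a_k^{-2}a_{k+1}^2, \ [a_k,a_{k+1}]=1 \rangle $$
exhibits $G_9'$ as a locally abelian, torsion-free, non-finitely-generated group (every finitely generated subgroup sits inside some $\langle a_{s-2},a_{s-1}\rangle \cong \Z^2$, injected into $(\mathbb{C},+)$ via $a_k \mapsto (1-i)^k$). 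By contrast, if $G_9$ were the one-relator group $\langle x,y \mid R_1\rangle$ with $R_1$ not a proper power, the Magnus--Moldavanskii decomposition of its commutator subgroup would yield a non-abelian (free-product-like) group; the second relator is precisely what imposes the commutation relations $[a_k,a_{k+1}]=1$. So the one-relator reduction cannot succeed, Brodskii's theorem does not apply, and the "finite free-group computation" you defer would terminate with a negative answer.

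Your fallback is closer to the right idea but also misdescribes the kernel: the kernel of $G_9 \to \Z$ is not free (a nontrivial free group of rank $\geq 2$ is non-abelian, and $\Z$ is ruled out because the kernel is not finitely generated), so a Reidemeister--Schreier computation aiming to exhibit freeness would not close the argument. The workable version is the paper's: show that $G_9'$ is left-orderable because left-orderability is a local property and every finitely generated subgroup of $G_9'$ embeds in a torsion-free abelian group, then conclude for $G_9$ as an extension of $\Z$ by a left-orderable group. The essential content you are missing is the explicit presentation of $G_9'$ and the homomorphism $a_k \mapsto (1-i)^k$ certifying torsion-freeness; without some substitute for that computation the proposal does not constitute a proof.
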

\begin{proof} In \cite{Ich} it was found that $G_9'$ has the following presentation
	$$ \langle a_k \  (k \in \Z) \ | \ a_{k+2} = a_k^{-2} a_{k+1}^{2}, \ [a_k, a_{k+1}] = 1 \rangle $$
Every finitely generated subgroup in $ G_9'$ is a subgroup of $H_{s,t}=\langle a_s, a_{s+1}, ..., a_t \rangle$ for some $s<t \in \Z$.  Using the relation $  a_{k+2} = a_k^{-2} a_{k+1}^{2} $, by induction, any element in $ H_{s,t}$ may be expressed as $a_{s-2}^na_{s-1}^m$ for some $n, m \in \Z$. Hence it is a subgroup in  $A_s = \langle a_{s-2}, a_{s-1} \rangle$. $A_s$ is torsion-free and thus left-orderable, because there is a homomorphism from $G_9'$ to a subgroup of complex numbers $ a_k \mapsto (1-i)^k $ since the following relation holds
	$$ 2((1-i)^{k+1} - (1-i)^k) = -2i(1-i)^k = (1-i)^2(1-i)^k = (1-i)^{k+2}.$$
and $a_{s-2}^na_{s-1}^m$ maps to $n(1-i)^{s-2} + m(1-i)^{s-1}$, which is equal to zero if and only if $n = m = 0.$ Since every finitely generated subgroup is left-orderable subgroup in $A_s$, by a Theorem~1.44 in \cite{CR} we obtain that $G_9'$ is left-orderable.
\end{proof}
As a result we have the following theorem.
\begin{theorem} If a group of a virtual knot in J. Green's table is orderable, then it is isomorphic to $\Z$, $G_1$, $G_4$, $G_7$ or $G_9$. Otherwise, it is not circularly orderable and it is isomorphic to $G_2$, $G_3$, $G_5$, $G_6$, or $G_8$.
\end{theorem}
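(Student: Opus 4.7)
The plan is to assemble this final theorem as a bookkeeping summary of the results proved in the preceding pages together with Ichimori's classification. By Ichimori's computation, every virtual knot group in J.~Green's table is isomorphic to $\Z$ or to one of the nine groups $G_1,\dots,G_9$ listed above, so it suffices to run down these ten cases and record orderability or its failure.

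First I would dispose of the orderable side. The group $\Z$ is itself left-orderable. The groups $G_1$ and $G_4$ are the trefoil and figure-eight knot groups respectively, hence left-orderable by the Howie--Short theorem. The group $G_7=\langle x,y\mid x^{-1}yx=y^{-1}xy\rangle$ is a one-relator group whose relator is not a proper power, so it is torsion-free and therefore left-orderable by Brodskii's theorem. Left-orderability of $G_9$ is the content of the proposition just proved, via the explicit embedding of $G_9'$ into the Gaussian integers.

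Next I would address the non-orderable side. The groups $G_2,G_5,G_6,G_8$ all have a presentation of the form $\langle x,y\mid y^n=1,\ xyx^{-1}=y^l\rangle$ with $n\in\{3,5,5,7\}$ and suitable $l$, so they fall under the corollary to the preceding proposition, which asserts that such a group can be circularly orderable only if it is abelian; since each of these groups is manifestly non-abelian, none of them is circularly orderable. The group $G_3$ is shown not to be circularly orderable in a separate proposition just above, using the quotient onto $\widehat{A_4}$ together with Theorem~\ref{TensorOrd} and Corollary~\ref{bib_col}.

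Since the cyclic group $\Z$ and $G_1,G_4,G_7,G_9$ are left-orderable while $G_2,G_3,G_5,G_6,G_8$ are not even circularly orderable, and since every virtual knot group in the table is one of these ten groups, the two assertions of the theorem follow immediately. There is no real obstacle: the proof is essentially a table lookup and its only burden is to cite the correct earlier proposition for each $G_i$.
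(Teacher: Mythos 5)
Your proposal matches the paper's (implicit) argument exactly: the theorem is stated as a summary of Ichimori's classification combined with the preceding results — Howie--Short for $G_1$ and $G_4$, Brodskii for $G_7$, the proposition on $G_9$, the proposition on $G_3$, and the corollary handling $G_2, G_5, G_6, G_8$ via the metacyclic presentation. The case-by-case bookkeeping you describe is precisely what the paper intends, so the proposal is correct and takes the same route.
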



\begin{thebibliography}{10}
\bibitem{BL} R. Brown, J.-L. Loday, \emph{"Van Kampen theorems for diagrams of spaces"}, Topology 26 (1987), 311-335.

\bibitem{CG} A. Clay, T. Ghaswala, \emph{"Circularly ordering direct products and the obstruction to left-orderability"} Pacific Jouenal of Mathematics, Vol. 312 (2021), No. 2, 401–419.

\bibitem{BCG} J.~Bell, A.~Clay, T.~Ghaswala, \emph{"Promoting circular-orderability to left orderability"}, Annales de l'Institut Fourier, Volume 71 (2021) no. 1, pp. 175-201.

\bibitem{BJR} R Brown, D.L Johnson, E.F Robertson, \emph{"Some Computations of Non-Abelian Tensor
Products of Groups"}, Journal of Algebra,
Volume 111, Issue 1 (1987), 177-202.

\bibitem{FPoly} G. Amrendra, M. Ivanov, M. Prabhakar, A. Vesnin. \emph{"Recurrent Generalization of F-Polynomials for Virtual Knots and Links"}, Symmetry 14, no. 1: 15 (2022).

\bibitem{FpolyCalc} A. Vesnin, M. Ivanov, \emph{"F-polynomials of tabulated virtual knots"}, Journal of Knot Theory and Its Ramifications, Vol. 29, No. 08 (2020).

\bibitem{Hillman} J.~Hillman, \emph{"The groups of fibered 2-knots"}, arxiv:1004.3827.
\bibitem{Zeleva} S.~D.~Zeleva.  \emph{"Cyclically ordered groups"}, Sibirsk. Mat. Z., 17(5):1046–1051 (1976).
\bibitem{SGK} Se-Goo~Ki, \emph{"Virtual knot groups and their peripheral structure"}, Journal of Knot Theory and Its Ramifications, Vol. 9, No. 6 (2000) 797-812.

\bibitem{RB} R.~Bieri, \emph{"Mayer-Vietoris Sequences for HNN-Groups
and Homological Duality"}, Math. Z. 143 (1975), 123 - 130.

\bibitem{Kau99} L.H.~Kauffman, \emph{"Virtual knot theory"}, European J. Combin. 20 (1999), 663–690.

\bibitem{Kau12} L.H.~Kauffman, \emph{"Introduction to virtual knot theory"} 
Journal of Knot Theory and Its Ramifications, Vol. 21, No. 13 (2012).

\bibitem{Kuz} Yu. V. Kuz’min, \emph{"The groups of knotted compact surfaces, and central extensions, Sbornik: Mathematics"}, Volume 187 (1996), Issue 2, 237–257.

\bibitem{Charlap} Leonard S. Charlap, \emph{"Bieberbach Groups and Flat Manifolds"}, Universitext, Springer New York (1986).

\bibitem{Ich} A. Ichimori, \emph{"Ribbon torus knot presented by virtual knots"}, Master’s Thesis (in Japanese), Osaka City University (2011).

\bibitem{BFM} R. D. Blyth, F. Fumagalli, M. Morigi, \emph{"Some structural results on the non-abelian tensor square of groups"} J. Group Theory 13 (2010), 83–94.

\bibitem{Maeda} T. Maeda, \emph{"Knot groups as commutator extension"}, Master Thesis (in Japanese), Kwansei Gakuin Univ. (1977).

\bibitem{HS} J. Howie, H. Short, \emph{"The Band-Sum Problem"}, Journal of The London Mathematical Society-second Series (1985): 571-576.

\bibitem{BE} I. Ba, M. Elhamdadi, \emph{"Circular orderability and quandles"}, arXiv:2204.09458 (2022).

\bibitem{CR}A.~Clay, D.~Rolfsen, \emph{"Ordered Groups and Topology"}, Graduate Studies in Mathematics 176 (2016).

\bibitem{AK} A. Ichimori, T. Kanenobu, \emph{"Ribbon torus knots presented by
virtual knots with up to 4 crossings"}, Journal of Knot Theory and Its Ramifications 21 (2012).

\bibitem{VKG} H. U. Boden, R. Gaudreau, E. Harper, A. J. Nicas, L. White, \emph{"Virtual knot groups and almost classical knots"}, Fundamenta Mathematicae 238 (2017), 101-142.
\bibitem{Green} J. Green, \emph{"A Table of Virtual Knots"}, webpage,\\ www.math.toronto.edu/drorbn/Students/GreenJ/
\bibitem{Brodskii} S. Brodskii, \emph{"Equations over groups and groups with one defining relation.""}
Sibirsk Mat. Zh. 25 (1984), 84-103. Translation to english in Siberian Math.
Journal 25 (1984), 235-251.
\end{thebibliography}
\end{document}